\documentclass[english]{article}
\usepackage[T1]{fontenc}
\usepackage[utf8]{inputenc}
\usepackage{mathrsfs}
\usepackage{url}
\usepackage{amsmath}
\usepackage{amsthm}
\usepackage{amssymb}

\usepackage{geometry}
\numberwithin{equation}{section}

\usepackage{fourier}
\usepackage{fancyhdr}
\newcommand{\R}{\mathbb{R}}
\newcommand{\p}{\ensuremath{\partial}}
\usepackage{graphics}
\usepackage{graphicx}\usepackage{pstricks}
\usepackage{pst-eucl}\usepackage{pst-plot}\usepackage{pst-tree}\usepackage{pstricks-add}\usepackage{tikz}

\makeatother

\theoremstyle{plain}
\newtheorem{thm}{\protect\theoremname}[section]
\theoremstyle{remark}
\newtheorem{notation}[thm]{\protect\notationname}
\theoremstyle{definition}
\newtheorem{defn}[thm]{\protect\definitionname}
\theoremstyle{plain}
\newtheorem{prop}[thm]{\protect\propositionname}
\theoremstyle{remark}
\newtheorem{rem}[thm]{\protect\remarkname}
\usepackage{babel}
\providecommand{\definitionname}{Definition}
\providecommand{\notationname}{Notation}
\providecommand{\propositionname}{Proposition}
\providecommand{\remarkname}{Remark}
\providecommand{\theoremname}{Theorem}

\date{}

\begin{document}
\title{Global existence and uniqueness for the planar four-velocity Broadwell
model with arbitrary $C^{1}$ bounded initial and boundary data in
a rectangular domain}
\author{Koudzo Togbévi Selom SOBAH\textsuperscript{\textsuperscript{1{*}}}\textsuperscript{}
and Amah Séna D'ALMEIDA\textsuperscript{\textsuperscript{2}}}

\maketitle
\textsuperscript{1,2}Department of Mathematics, Faculty of Sciences
and Laboratory of Mathematics and Applications, \\University of Lomé,
Lomé, TOGO

{*} corresponding email: deselium@gmail.com 
\begin{abstract}
This paper establishes the global existence of classical solutions
to the initial-boundary value problem for the planar four-velocity
Broadwell model in a multidimensional setting. While uniqueness was
already established in our previous work---which addressed existence
under the assumption of sufficiently small, bounded, and $C^{1}$-regular
initial and boundary data---the objective of the present study is
to completely lift this size restriction. By extending the previously
developed fixed-point framework, we prove that the methodology remains
highly effective for arbitrary initial and boundary data. Within this
generalized setting, both the solutions and their first-order partial
derivatives are shown to be well-defined and to exist globally for
all times.
\end{abstract}
Keywords: Discrete Boltzmann equation, Planar four-velocity Broadwell
model, Initial-boundary value problem , Global existence and uniqueness,
Arbitrary $C^{1}$ bounded data, Fixed-point method

\section*{Introduction}

The approximation of the Boltzmann equation through the discretization
of the velocity space leads to the framework of discrete kinetic equations.
By restricting particle velocities to a finite set of vectors, the
complex nonlinear integro-differential kinetic equation \cite{botlzman}
is transformed into a coupled system of semilinear hyperbolic partial
differential equations. Since the seminal contributions of Broadwell
\cite{3,4}, who formulated the first physically consistent discrete
velocity models for gas dynamics, a general theoretical framework
for binary \cite{1} and higher-order multiple collisions was established.
A central issue in discrete kinetic theory is the global existence
and uniqueness of classical solutions.The mathematical analysis in
one spatial dimension is now well-established, with comprehensive
global existence results for mixed problems \cite{5,6,11,14,compte rendu meca}.
For instance, fractional step techniques were successfully applied
to the two-velocity Carleman model \cite{2}, while exact solutions
were derived for the three-velocity Broadwell model \cite{7}. Furthermore,
the initial value problem for an arbitrary number of velocities was
solved in, and entropy-based formulations have led to bounded global
solutions \cite{Cabanne-kawashima} and classical solutions under
diverse boundary conditions \cite{kawasima}. In contrast, expanding
these results to multidimensional domains presents severe mathematical
obstacles. Early breakthroughs were limited to the stationary planar
four-velocity Broadwell model via fixed-point theorems \cite{Cercoi illner shinbrit},
while initial non-stationary attempts relied on fractional steps \cite{Toscani walus}.
Later investigations established existence and non-uniqueness features
for the two-dimensional stationary boundary value problem within the
general Broadwell framework $B_{\theta},\theta\in\left[0;\pi/2\right]$(\cite{defoou})
accompanied by exact analytical solutions \cite{dameida agosse,Nicou=00003D00003D0000E9}
and numerical treatments of mixed initial-boundary value problems
\cite{lakou agoss d almoe}. A broader 15-velocity model with three
moduli was also analyzed in \cite{d almeida}. Despite these contributions,
and apart from the early insights in \cite{Toscani walus}, the multidimensional
non-stationary problem has long remained a major open challenge. In
recent works \cite{arxiv 4V,arxiv Sob dalm 4Vgene}, we addressed
this gap by developing fixed-point methods to establish the global
existence of solutions for the multidimensional initial-boundary value
problem. However, that analysis was subject to the assumption of sufficiently
small data with their first order partial derivatives.\\
  The objective of the present paper is to completely lift this restriction.
We demonstrate that our previously established fixed-point framework
remains robust enough to overcome the smallness constraint. More precisely,
we prove the global existence and uniqueness of solutions to the multidimensional
initial-boundary value problem for the standard planar four-velocity
Broadwell under arbitrary (large) initial data with their first order
partial derivatives. Under this framework, the solutions and their
derivatives are well-defined and exist for all times; however, the
asymptotic behavior of these derivatives as $t\to+\infty$ (specifically,
whether they remain uniformly bounded or grow indefinitely) remains
an open question. \\
The remainder of this paper is organized as follows. In Section \ref{sec:Preliminary},
we give some preliminaries and state the main results of the paper;
in Section \ref{sec:Positiveness-of-the}, we prove the positivity
of the solutions for positive data and in Section \ref{sec:Existence},
we prove the existence of solutions.

\section{Preliminaries }\label{sec:Preliminary}

We extend the framework presented in the Section 1 of \cite{arxiv Sob dalm 4Vgene}
where $K$ be a compact convex non-empty subset of $\R^{d},$$d$
integer, $d\geq1$ and let $\mathcal{H}\equiv\{H_{m}\}^{m_{0}}_{m=1}$
be a finite family of pair-wise distinct affine hyper-planes of $\R^{d}$.
Recall the notations 
\begin{notation}
\label{wqjjqu} For $f$ bounded real valued function with domain
$D_{f}$, $\|f\|_{\infty}\equiv{\displaystyle \sup_{x\in D_{f}}}|f(x)|$.
When $D_{f}\subset\mathbb{R}^{d}$ and $f\equiv f(x_{1},\dots,x_{d})$
and ${\displaystyle \frac{\partial f}{\partial x_{\alpha}}}$ (for
$\alpha=1,\dots,d$) are bounded, $\|f\|_{1}\equiv\max\left\{ \|f\|_{\infty},\,{\displaystyle \max_{1\le\alpha\le d}}\left\Vert \frac{\partial f}{\partial x_{\alpha}}\right\Vert _{\infty}\right\} .$
\end{notation}

\begin{notation}
\label{iqiqjja}For $p\ge2$ and any function $F=(f_{1},\dots,f_{p})$
with $f_{i}$ real-valued and bounded, 
\begin{equation}
\|F\|\doteq{\displaystyle \max_{1\le i\le p}}\|f_{i}\|_{\infty}.\label{oosaz}
\end{equation}
\end{notation}

\begin{notation}
\label{kqqkkd} Let $C\left(K,\R\right)$, the space of continuous
functions from $K$ on $\R,$ be endowed with $\left\Vert \cdot\right\Vert _{\infty}$.
Let $\mathscr{E}_{\mathcal{H}}$ be the subspace of $C\left(K,\R\right)$
consisting of functions $f$ such that $\forall\alpha=1,\cdots,d,$$\dfrac{\partial f}{\partial x_{\alpha}}$
is defined everywhere on $\operatorname{int}(K)\setminus{\displaystyle \cup^{m_{0}}_{m=1}}H_{m}$
and is continuous and bounded.
\end{notation}

\begin{defn}
\label{nnjnjn} For $R,R'>0,$ let us define $\mathscr{M}_{R,R'}\equiv\left\{ \right.F\in\left(\mathscr{E}_{\mathcal{H}}\right)^{p}/\left\Vert F\right\Vert <R\text{ and }{\displaystyle \max_{1\le\alpha\le d}}\left\Vert \dfrac{\partial F}{\partial x_{\alpha}}\right\Vert <R'\left.\right\} $
and $\mathscr{M}^{+}_{R,R'}\equiv\left\{ \right.F\in\mathscr{M}_{R,R'}/F\geq0\left.\right\} $
. 
\end{defn}

Then we prove exactly as in the Proposition 1.8 in \cite{arxiv Sob dalm 4Vgene}
that 
\begin{prop}
\label{prop::odlp-1} $\mathscr{M}_{R,R'}$ and consequently \textup{$\mathscr{M}^{+}_{R,R'}$
are} non-empty convex subsets of \textup{$C\left(K,\R\right)^{p}$
}and are relatively compact in $\left(C\left(K,\R\right)^{p},\left\Vert \cdot\right\Vert \right).$ 
\end{prop}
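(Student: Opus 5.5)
The plan is to check the three properties directly: non-emptiness, convexity, and relative compactness. For relative compactness I would use the Arzelà--Ascoli theorem on the compact set $K$. Throughout I assume, as is implicit in the framework of \cite{arxiv Sob dalm 4Vgene}, that $\operatorname{int}(K)\neq\emptyset$. Otherwise the derivative condition in Notation \ref{kqqkkd} is vacuous and equicontinuity cannot be expected.

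\emph{Non-emptiness and convexity.} The zero map $F\equiv0$ lies in $(\mathscr{E}_{\mathcal{H}})^{p}$ and satisfies $\|F\|=0<R$ and $\|\partial F/\partial x_{\alpha}\|=0<R'$. Since $0\ge0$, it also lies in $\mathscr{M}^{+}_{R,R'}$.

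For convexity, take $F,G\in\mathscr{M}_{R,R'}$ and $\lambda\in[0,1]$. The combination $\lambda F+(1-\lambda)G$ is continuous on $K$. Its partial derivatives exist on the common set $\operatorname{int}(K)\setminus\cup_{m}H_{m}$ and equal $\lambda\,\partial F/\partial x_{\alpha}+(1-\lambda)\,\partial G/\partial x_{\alpha}$ there, so they are continuous and bounded. The triangle inequality for $\|\cdot\|$ then gives the strict bounds $<R$ and $<R'$. Nonnegativity is preserved under convex combinations, so $\mathscr{M}^{+}_{R,R'}$ is convex as well. Since $\mathscr{M}^{+}_{R,R'}\subset\mathscr{M}_{R,R'}$, relative compactness of $\mathscr{M}^{+}_{R,R'}$ will follow from that of $\mathscr{M}_{R,R'}$.

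\emph{Relative compactness.} By Arzelà--Ascoli, applied componentwise (a finite product of relatively compact sets is relatively compact), it suffices to show two things:
\begin{itemize}
\item uniform boundedness, which is immediate since $\|f_i\|_\infty<R$;
\item equicontinuity, which I would obtain through a uniform Lipschitz bound
\[
|f(x)-f(y)|\le \sqrt{d}\,R'\,|x-y|,\qquad x,y\in K,
\]
for every component $f$ of every $F\in\mathscr{M}_{R,R'}$.
\end{itemize}

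This Lipschitz bound is the main obstacle, because $f$ is only differentiable off the hyperplanes and in the interior of $K$. The approach has two steps.

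First, take $x,y\in\operatorname{int}(K)$ such that the segment $[x,y]$ (contained in $\operatorname{int}(K)$ by convexity) is not contained in any $H_m$. Then $[x,y]$ meets $\cup_{m}H_{m}$ in at most $m_0$ points $t_1<\dots<t_k$ of the parameter $t\in[0,1]$. On each open subinterval, $g(t)=f(x+t(y-x))$ is $C^1$ with $|g'(t)|\le\sqrt d\,R'|y-x|$ by Cauchy--Schwarz. Apply the mean value theorem on each closed subinterval, which is legitimate since $g$ is continuous on $[0,1]$. Summing the telescoping increments gives the bound.

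Second, extend to general $x,y\in K$ by density. The pairs of the first step are dense in $K\times K$: $\operatorname{int}(K)$ is dense in the convex set $K$ when the interior is nonempty, and a segment lies in some $H_m$ only for a nowhere dense set of pairs. Continuity of $f$ on $K$ then lets me pass to the limit.

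The resulting bound is uniform over $\mathscr{M}_{R,R'}$, which gives equicontinuity and hence relative compactness in $\left(C(K,\R)^{p},\|\cdot\|\right)$. The same conclusion holds for the subset $\mathscr{M}^{+}_{R,R'}$.
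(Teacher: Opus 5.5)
Your proof is correct and follows the same route the paper relies on (the paper only says it is proved exactly as Proposition 1.8 of its earlier work): the zero tuple for non-emptiness, direct convexity, and Arzel\`a--Ascoli, with equicontinuity coming from a uniform Lipschitz bound that you get from the mean value theorem on the finitely many pieces into which the hyperplanes cut a segment, extended to all of $K\times K$ by density and continuity. Your caveat $\operatorname{int}(K)\neq\emptyset$ is genuinely needed: otherwise the derivative condition is vacuous, so $\mathscr{M}_{R,R'}$ is an open ball in $C(K,\R)^{p}$, which is not relatively compact unless $K$ is a point. That caveat is satisfied in the paper's application $K=\mathscr{P}$.
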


Now we recall the kinetic equations for the four-velocity Broadwell
model in the plane, where $N_{i}\equiv N_{i}(t,x,y)\in\R:$ 
\begin{equation}
\begin{cases}
{\textstyle \dfrac{\p N_{1}}{\p t}+c\dfrac{\p N_{1}}{\p x}}=Q\left(N\right)\\
\\\dfrac{\p N_{2}}{\p t}+c\dfrac{\p N_{2}}{\p y}=-Q\left(N\right)\\
\\\dfrac{\p N_{3}}{\p t}-c\dfrac{\p N_{3}}{\p y}=-Q\left(N\right)\\
\\\dfrac{\p N_{4}}{\p t}-c\dfrac{\p N_{4}}{\p x}=Q\left(N\right)
\end{cases}\label{eq:koiqmn}
\end{equation}

\begin{equation}
Q\left(N\right)=2cS\left(N_{2}N_{3}-N_{1}N_{4}\right),\label{eq:jjdpoa}
\end{equation}
$c,S>0$ constants. 

We recall the problems $\Sigma$ and $\Sigma_{\tau,\tau'}$ posed
in \cite{arxiv 4V}. $\left[a_{1},b_{1}\right]\times\left[a_{2},b_{2}\right]\subset\R^{2}.$
$\Sigma$ is system \ref{eq:koiqmn} on $\left[0;+\infty\right[\times\left[a_{1},b_{1}\right]\times\left[a_{2},b_{2}\right]$
with the initial and boundary conditions \ref{eq:lsos}-\ref{eq:ikioi-1}

\begin{align}
N_{i}\left(0,x,y\right)= & N^{0}_{i}\left(x,y\right),\;\left(x,y\right)\in\left[a_{1};b_{1}\right]\times\left[a_{2};b_{2}\right],i=1,\cdots,4\label{eq:lsos}\\
N_{1}\left(t,a_{1},y\right)= & N^{-}_{1}\left(t,y\right),\;\left(t,y\right)\in\left[0;+\infty\right[\times\left[a_{2};b_{2}\right]\label{eq:losso}\\
N_{2}\left(t,x,a_{2}\right)= & N^{-}_{2}\left(t,x\right),\;\left(t,x\right)\in\left[0;+\infty\right[\times\left[a_{1};b_{1}\right]\label{eq:lsoo}\\
N_{3}\left(t,x,b_{2}\right)= & N^{+}_{3}\left(t,x\right),\;\left(t,x\right)\in\left[0;+\infty\right[\times\left[a_{1};b_{1}\right]\label{eq:ikioi}\\
N_{4}\left(t,b_{1},y\right)= & N^{+}_{4}\left(t,y\right),\;\left(t,y\right)\in\left[0;+\infty\right[\times\left[a_{2};b_{2}\right]\label{eq:ikioi-1}
\end{align}
The initial and boundary data satisfy the compatibility conditions
\ref{eq:kqiiq}-\ref{eq:looqp-1} 

\begin{align}
N^{0}_{1}\left(a_{1},y\right) & =N^{-}_{1}\left(0,y\right),\;y\in\left[a_{2};b_{2}\right]\label{eq:kqiiq}\\
N^{0}_{2}\left(x,a_{2}\right) & =N^{-}_{2}\left(0,x\right),\;x\in\left[a_{1};b_{1}\right]\label{eq:lqoop}\\
N^{0}_{3}\left(x,b_{2}\right) & =N^{+}_{3}\left(0,x\right),\;x\in\left[a_{1};b_{1}\right]\label{eq:loppq-1}\\
N^{0}_{4}\left(b_{1},y\right) & =N^{+}_{4}\left(0,y\right),\;y\in\left[a_{2};b_{2}\right]\label{eq:looqp-1}
\end{align}

For $\left[\tau,\tau'\right]\subset\R^{+},$ $\Sigma_{\tau,\tau'}$
is \ref{eq:koiqmn} on $\left[\tau,\tau'\right]\times\left[a_{1},b_{1}\right]\times\left[a_{2},b_{2}\right]$
with the initial and boundary conditions \ref{eq:lsos-1}-\ref{eq:ikioi-1-1}
\begin{align}
N_{i}\left(\tau,x,y\right)= & N^{\tau}_{i}\left(x,y\right),\;\left(x,y\right)\in\left[a_{1};b_{1}\right]\times\left[a_{2};b_{2}\right],i=1,\cdots,4\label{eq:lsos-1}\\
N_{1}\left(t,a_{1},y\right)= & N^{-}_{1}\left(t,y\right),\;\left(t,y\right)\in\left[\tau,\tau'\right]\times\left[a_{2};b_{2}\right]\label{eq:losso-1}\\
N_{2}\left(t,x,a_{2}\right)= & N^{-}_{2}\left(t,x\right),\;\left(t,x\right)\in\left[\tau,\tau'\right]\times\left[a_{1};b_{1}\right]\label{eq:lsoo-1}\\
N_{3}\left(t,x,b_{2}\right)= & N^{+}_{3}\left(t,x\right),\;\left(t,x\right)\in\left[\tau,\tau'\right]\times\left[a_{1};b_{1}\right]\label{eq:ikioi-2}\\
N_{4}\left(t,b_{1},y\right)= & N^{+}_{4}\left(t,y\right),\;\left(t,y\right)\in\left[\tau,\tau'\right]\times\left[a_{2};b_{2}\right]\label{eq:ikioi-1-1}
\end{align}
and the compatibility conditions \ref{eq:kqiiq-1}-\ref{eq:looqp-1-1}
\begin{align}
N^{\tau}_{1}\left(a_{1},y\right) & =N^{-}_{1}\left(\tau,y\right),\;y\in\left[a_{2};b_{2}\right]\label{eq:kqiiq-1}\\
N^{\tau}_{2}\left(x,a_{2}\right) & =N^{-}_{2}\left(\tau,x\right),\;x\in\left[a_{1};b_{1}\right]\label{eq:lqoop-1}\\
N^{\tau}_{3}\left(x,b_{2}\right) & =N^{+}_{3}\left(\tau,x\right),\;x\in\left[a_{1};b_{1}\right]\label{eq:loppq-1-1}\\
N^{\tau}_{4}\left(b_{1},y\right) & =N^{+}_{4}\left(\tau,y\right),\;y\in\left[a_{2};b_{2}\right]\label{eq:looqp-1-1}
\end{align}

The data and their first order partial derivatives are assumed to
be continuous and bounded. More over the data are assumed to be non-negative. 

We shall use $\left\Vert u\right\Vert _{\infty}={\displaystyle \sup_{\phi\in D_{u}}}\left|u\left(\phi\right)\right|$
for bounded real functions $u\equiv u\left(\phi\right)$ with domain
$D_{u}$; for bounded real functions $g\equiv g\left(t,x,y\right)$
with bounded partial derivatives, we shall use $\left\Vert g\right\Vert _{1}\equiv\max\left\{ \left\Vert g\right\Vert _{\infty},\left\Vert \dfrac{\partial g}{\partial t}\right\Vert _{\infty},\left\Vert \dfrac{\partial g}{\partial x}\right\Vert _{\infty},\left\Vert \dfrac{\partial g}{\partial y}\right\Vert _{\infty}\right\} .$ 

Let us set for $\left(t,x,y\right)\in\left[\tau,\tau'\right]\times\left[a_{1},b_{1}\right]\times\left[a_{2},b_{2}\right],$
\begin{align}
\overline{N^{\tau}_{1}}\left(t,x,y\right) & \equiv N^{\tau}_{1}\left(x-c\left(t-\tau\right),y\right)\label{eq:kqiiq-1-1}\\
\overline{N^{\tau}_{2}}\left(t,x,y\right) & \equiv N^{\tau}_{2}\left(x,y-c\left(t-\tau\right)\right)\label{eq:lqoop-1-1}\\
\overline{N^{\tau}_{3}}\left(t,x,y\right) & \equiv N^{\tau}_{3}\left(x,y+c\left(t-\tau\right)\right)\label{eq:loppq-1-1-1}\\
\overline{N^{\tau}_{4}}\left(t,x,y\right) & \equiv N^{\tau}_{4}\left(x-c\left(t-\tau\right),y\right)\label{eq:looqp-1-1-1}
\end{align}
and for $\left(t,x,y\right)\in\left[0;+\infty\right[\times\left[a_{1},b_{1}\right]\times\left[a_{2},b_{2}\right],$
when they are defined 
\begin{align}
\overline{N^{-}_{1}}\left(t,x,y\right) & \equiv N^{-}_{1}\left(t-\frac{x-a_{1}}{c},y\right)\label{eq:kqiiq-1-1-1}\\
\overline{N^{-}_{2}}\left(t,x,y\right) & \equiv N^{-}_{2}\left(t-\frac{y-a_{2}}{c},x\right)\label{eq:lqoop-1-1-1}\\
\overline{N^{+}_{3}}\left(t,x,y\right) & \equiv N^{+}_{3}\left(t-\frac{b_{2}-y}{c},x\right)\label{eq:loppq-1-1-1-1}\\
\overline{N^{+}_{4}}\left(t,x,y\right) & \equiv N^{+}_{4}\left(t-\frac{b_{1}-x}{c},y\right).\label{eq:looqp-1-1-1-1}
\end{align}

Let us denote $\mathscr{P}\equiv\left[\tau,\tau'\right]\times\left[a_{1},b_{1}\right]\times\left[a_{2},b_{2}\right].$
In \cite{arxiv 4V} we introduce the continuous operator $\mathcal{T}:C\left(\mathscr{P},\R\right)^{4}\longrightarrow C\left(\mathscr{P},\R\right)^{4},$
with the following property:
\begin{prop}
\label{prop:Let-the-4-tuple} $N=\left(N_{i}\right)^{4}_{i=1}$ continuous
on $\mathscr{P}$ and possessing all first-order partial derivatives
is a solution of problem $\Sigma_{\tau,\tau'}$ iff $N$ is a fixed
point of $\mathcal{T}.$ 
\end{prop}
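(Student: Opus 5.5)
We first write down the operator $\mathcal{T}$ explicitly, since the statement only says it was introduced in the earlier work. The natural choice, which we adopt, is the Duhamel integration of each equation of \eqref{eq:koiqmn} along its characteristic. For $N_{1}$ the characteristic through $(t,x,y)$ is $s\mapsto(s,x-c(t-s),y)$. Going backwards in time, it meets either the initial plane $t=\tau$ (when $x-c(t-\tau)\ge a_{1}$) or the inflow boundary $x=a_{1}$ at time $t-(x-a_{1})/c$. Accordingly we set
\[
\mathcal{T}_{1}(N)(t,x,y)=\overline{N^{\tau}_{1}}(t,x,y)+\int_{\tau}^{t}Q(N)\bigl(s,x-c(t-s),y\bigr)\,ds
\]
in the first region, and
\[
\mathcal{T}_{1}(N)(t,x,y)=\overline{N^{-}_{1}}(t,x,y)+\int_{t-(x-a_{1})/c}^{t}Q(N)\bigl(s,x-c(t-s),y\bigr)\,ds
\]
in the second. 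The components $\mathcal{T}_{2},\mathcal{T}_{3},\mathcal{T}_{4}$ are defined in the same way, with $-Q$ for $i=2,3$ and with the characteristics $y-c(t-s)$, $y+c(t-s)$, $x+c(t-s)$ and the inflow boundaries $y=a_{2}$, $y=b_{2}$, $x=b_{1}$. The compatibility conditions \eqref{eq:kqiiq-1}--\eqref{eq:looqp-1-1} make the two formulas agree on the separating surface (for instance $x-c(t-\tau)=a_{1}$), so each $\mathcal{T}_{i}(N)$ is continuous on $\mathscr{P}$.

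\textbf{Solution implies fixed point.} Let $N$ be a solution of $\Sigma_{\tau,\tau'}$ with first-order partial derivatives. Fix $(t,x,y)$ and set $\varphi(s)=N_{1}(s,x-c(t-s),y)$. By the first equation, $\varphi'(s)=\partial_{t}N_{1}+c\,\partial_{x}N_{1}=Q(N)$ evaluated along the characteristic. Integrating from the entry point to $s=t$, and using either the initial condition \eqref{eq:lsos-1} or the boundary condition \eqref{eq:losso-1} for the value at the entry point, gives exactly $N_{1}=\mathcal{T}_{1}(N)$. The other components are handled identically.

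\textbf{Fixed point implies solution.} Let $N=\mathcal{T}(N)$ be continuous with first-order partials. Setting $t=\tau$ in the first formula gives \eqref{eq:lsos-1}, and setting $x=a_{1}$ in the second gives \eqref{eq:losso-1}; the other data conditions follow in the same way. It remains to show that the transport equations hold.
\begin{itemize}
\item Along a fixed characteristic, the map $s\mapsto N_{1}(s,x-c(t-s),y)$ equals the data term, which is constant along that characteristic, plus $\int Q(N)$ with a fixed lower limit and variable upper limit $s$.
\item Because $Q(N)$ is continuous, the fundamental theorem of calculus shows this map is differentiable with derivative $Q(N)$.
\item Since $N_{1}$ has partial derivatives, the chain rule identifies this derivative with $\partial_{t}N_{1}+c\,\partial_{x}N_{1}$, which yields the first equation of \eqref{eq:koiqmn}.
\end{itemize}

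\textbf{Main obstacle.} The delicate point is the chain rule in the last step. Existence of partial derivatives alone does not guarantee differentiability along the oblique direction $(1,c)$ in the $(t,x)$ variables. We handle this by computing $\partial_{t}$ and $\partial_{x}$ of the explicit formula for $\mathcal{T}_{1}(N)$ directly, by differentiating under the integral sign together with the variable limits, and then forming the combination $\partial_{t}+c\,\partial_{x}$.
\begin{itemize}
\item The integrand terms involving $\partial_{x}Q$ cancel between the two derivatives.
\item The derivatives of $\overline{N^{\tau}_{1}}$ and $\overline{N^{-}_{1}}$ are annihilated by $\partial_{t}+c\,\partial_{x}$, because these functions are constant along characteristics.
\item What survives is the boundary term $Q(N)(t,x,y)$.
\end{itemize}
This computation must be carried out separately on each side of the separating surface $x-c(t-\tau)=a_{1}$. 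On that surface itself we use the compatibility conditions, and the assumed existence of the partials of $N$, to conclude the equation there as well.
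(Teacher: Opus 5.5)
Your route is the intended one: Duhamel integration along characteristics, with the compatibility conditions gluing the initial-plane and inflow-boundary formulas. This is exactly what $\mathcal{T}$ encodes, and the paper imports this proposition from its earlier work without giving a proof. There is, however, a real gap in the direction \emph{solution $\Rightarrow$ fixed point}.

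There you write $\varphi'(s)=\partial_tN_1+c\,\partial_xN_1$ for $\varphi(s)=N_1(s,x-c(t-s),y)$. This is the chain rule along the oblique direction $(1,c,0)$, which is precisely the step you yourself say is not guaranteed by the mere existence of partial derivatives. Your remedy in the converse (differentiate the explicit formula for $\mathcal{T}_1(N)$) is not available here, since in this direction you do not yet know that $N_1$ is given by that formula. So some regularity beyond the existence of partials must be used, for instance the one built into the space $E$. If $\partial_xN_1$ exists near a point and is continuous there, then with $x_s=x-c(t-s)$,
\[
\varphi(s+h)-\varphi(s)=\bigl[N_1(s+h,x_s+ch,y)-N_1(s+h,x_s,y)\bigr]+\bigl[N_1(s+h,x_s,y)-N_1(s,x_s,y)\bigr].
\]
The mean value theorem in $x$ on the first bracket, together with the existence of $\partial_tN_1$ for the second, gives $\varphi'(s)=\partial_tN_1+c\,\partial_xN_1=Q(N)$ at that point. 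A characteristic of $N_1$ meets the planes $y-c(t-\tau)=a_2$, $y+c(t-\tau)=b_2$, $x+c(t-\tau)=b_1$ at finitely many points, which is harmless when integrating the continuous function $\varphi$. The characteristics lying inside $x-c(t-\tau)=a_1$ are handled as limits of neighbouring ones, since both sides of the Duhamel identity are continuous on the compact set $\mathscr{P}$. If you only know that the partials are bounded, then $N_1$ is Lipschitz. Rademacher's theorem plus Fubini in the coordinates $(s,x-cs,y)$ then gives the identity on almost every characteristic, hence on all of them by continuity.

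In the converse direction your outline is sound, but differentiating under the integral sign needs a domination hypothesis that you do not state. Both this and your separate argument on the surface $x-c(t-\tau)=a_1$ can be avoided. Since $(t+h,x,y)$ and $(t,x-ch,y)$ lie on the same characteristic, the fixed-point formula gives, in either region,
\[
N_1(t+h,x,y)-N_1(t,x-ch,y)=\int_t^{t+h}Q(N)\bigl(s,x-c(t+h-s),y\bigr)\,ds.
\]
Hence
\[
\frac{N_1(t+h,x,y)-N_1(t,x,y)}{h}=\frac{1}{h}\int_t^{t+h}Q(N)\bigl(s,x-c(t+h-s),y\bigr)\,ds-c\,\frac{N_1(t,x-ch,y)-N_1(t,x,y)}{-ch}.
\]
This tends to $Q(N)(t,x,y)-c\,\partial_xN_1(t,x,y)$, using only the continuity of $Q(N)$ and the existence of $\partial_xN_1$. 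It gives the transport equation at every point at once, including on the separating surface.
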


For an inequality $f\left(t,x,y\right)\leq0$ defined on $\mathscr{P}$,
let $\mathbb{I}_{f\left(t,x,y\right)\leq0}$ be the identity function
of the set \\$\left\{ \left(t,x,y\right)\in\mathscr{P}:f\left(t,x,y\right)\leq0\right\} .$
Then $\mathcal{T}$ is defined by 
\begin{equation}
\mathcal{T}_{1}\left(M\right)\left(t,x,y\right)=\mathcal{T}^{A}_{1}\left(M\right)\left(t,x,y\right)\cdot\mathbb{I}_{x-c\left(t-\tau\right)\geq a_{1}}\left(t,x,y\right)+
\mathcal{T}^{B}_{1}\left(M\right)\left(t,x,y\right)\cdot\mathbb{I}_{x-c\left(t-\tau\right)\leq a_{1}}\left(t,x,y\right)\label{aoalal-2}
\end{equation}
where

\begin{align}
\mathcal{T}^{A}_{1}\left(M\right)\left(t,x,y\right) & ={\displaystyle \int^{t}_{\tau}}Q\left(M\right)\left(s,x+c\left(s-t\right),y\right)ds+\overline{N^{\tau}_{1}}\left(t,x,y\right)\label{eq:olole-1-3}
\end{align}

\begin{align}
\mathcal{T}^{B}_{1}\left(M\right)\left(t,x,y\right) & ={\displaystyle \int^{t}_{t-\frac{1}{c}x+\frac{a_{1}}{c}}}Q\left(M\right)\left(s,x+c\left(s-t\right),y\right)ds+\overline{N^{-}_{1}}\left(t,x,y\right)\label{eq:olole-1-1-2}
\end{align}
\begin{equation}
\mathcal{T}_{2}\left(M\right)\left(t,x,y\right)=\mathcal{T}^{A}_{2}\left(M\right)\left(t,x,y\right)\cdot\mathbb{I}_{y-c\left(t-\tau\right)\geq a_{2}}\left(t,x,y\right)+
\mathcal{T}^{B}_{2}\left(M\right)\left(t,x,y\right)\cdot\mathbb{I}_{y-c\left(t-\tau\right)\leq a_{2}}\left(t,x,y\right)\label{aoalal-1-2}
\end{equation}

where 
\begin{align}
\mathcal{T}^{A}_{2}\left(M\right)\left(t,x,y\right) & ={\textstyle {\displaystyle \int^{t}_{\tau}}-Q\left(M\right)}\left(s,x,y+c\left(s-t\right)\right)ds+{\textstyle \overline{N^{\tau}_{2}}\left(t,x,y\right)}\label{eq:olole-1-1-1-1-1-1-1}
\end{align}

\begin{align}
\mathcal{T}^{B}_{2}\left(M\right)\left(t,x,y\right) & ={\textstyle {\displaystyle \int^{t}_{t-\frac{1}{c}y+\frac{a_{2}}{c}}}}-Q\left(M\right)\left(s,x,y+c\left(s-t\right)\right)ds+\overline{N^{-}_{2}}\left(t,x,y\right)\label{eq:olole-1-1-1-1-2-1}
\end{align}

\begin{equation}
\mathcal{T}_{3}\left(M\right)\left(t,x,y\right)=\mathcal{T}^{A}_{3}\left(M\right)\left(t,x,y\right)\cdot\mathbb{I}_{y+c\left(t-\tau\right)\leq b_{2}}\left(t,x,y\right)+
\mathcal{T}^{B}_{3}\left(M\right)\left(t,x,y\right)\cdot\mathbb{I}_{y+c\left(t-\tau\right)\geq b_{2}}\left(t,x,y\right)+\label{aoalal-1-1-3}
\end{equation}
where 
\begin{align}
\mathcal{T}^{A}_{3}\left(M\right)\left(t,x,y\right) & ={\displaystyle \int^{t}_{\tau}}-Q\left(M\right)\left(s,x,y-c\left(s-t\right)\right)ds+\overline{N^{\tau}_{3}}\left(t,x,y\right)\label{eq:olole-1-2-2}
\end{align}
\begin{align}
\mathcal{T}^{B}_{3}\left(M\right)\left(t,x,y\right) & ={\displaystyle \int^{t}_{t+\frac{1}{c}y-\frac{b_{2}}{c}}}-Q\left(M\right)\left(s,x,y-c\left(s-t\right)\right)ds+\overline{N^{+}_{3}}\left(t,x,y\right)\label{eq:olole-1-1-1-1}
\end{align}

\begin{equation}
\mathcal{T}_{4}\left(M\right)\left(t,x,y\right)=\mathcal{T}^{A}_{4}\left(M\right)\left(t,x,y\right)\cdot\mathbb{I}_{x+c\left(t-\tau\right)\leq b_{1}}\left(t,x,y\right)+
\mathcal{T}^{B}_{4}\left(M\right)\left(t,x,y\right)\cdot\mathbb{I}_{x+c\left(t-\tau\right)\geq b_{1}}\left(t,x,y\right)\label{aoalal-1-1-1-1}
\end{equation}
where 
\begin{align}
\mathcal{T}^{A}_{4}\left(M\right)\left(t,x,y\right) & ={\displaystyle \int^{t}_{\tau}}Q\left(M\right)\left(s,x-c\left(s-t\right),y\right)ds+\overline{N^{\tau}_{4}}\left(t,x,y\right)\label{eq:olole-1-2-1-2}
\end{align}
\begin{align}
\mathcal{T}^{B}_{4}\left(M\right)\left(t,x,y\right) & =\int^{t}_{t+\frac{1}{c}x-\frac{b_{1}}{c}}Q\left(M\right)\left(s,x-c\left(s-t\right),y\right)ds+\overline{N^{+}_{4}}\left(t,x,y\right).\label{eq:olole-1-1-1-2-2}
\end{align}
In \cite{arxiv 4V}, we also introduce another continuous operator
$\mathcal{T}^{\sigma}:C\left(\mathscr{P},\R\right)^{4}\longrightarrow C\left(\mathscr{P},\R\right)^{4}$
which satisfies the following property:
\begin{prop}
\label{prop:pqmm} $\mathcal{T}^{\sigma}$ is non-negative when $\sigma>0$
is sufficiently large. Suppose that the 4-tuple $N=\left(N_{i}\right)^{4}_{i=1}$
is continuous on $\mathscr{P}$ and possesses all first-order partial
derivatives. If $N$ is a fixed point of $\mathcal{T}^{\sigma}$ then
$N$ is a non-negative solution of problem $\Sigma_{\tau,\tau'}.$
\end{prop}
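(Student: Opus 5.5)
The statement has two parts, and I would treat them separately. I assume $\mathcal{T}^{\sigma}$ is built as in \cite{arxiv 4V}: add $\sigma N_{i}$ to both sides of each equation of (\ref{eq:koiqmn}) and integrate along characteristics with the factor $e^{-\sigma(t-s)}$. For instance, on the region $x-c(t-\tau)\geq a_{1}$,
\[
\mathcal{T}^{\sigma}_{1}(M)(t,x,y)=e^{-\sigma(t-\tau)}\,\overline{N^{\tau}_{1}}(t,x,y)+\int^{t}_{\tau}e^{-\sigma(t-s)}\bigl(Q(M)+\sigma M_{1}\bigr)\bigl(s,x+c(s-t),y\bigr)\,ds .
\]
On the complementary region the lower limit becomes the boundary-entry time and the data term becomes $e^{-\sigma(x-a_{1})/c}\,\overline{N^{-}_{1}}$. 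The components $i=2,3,4$ are analogous, with $-Q(M)$ for $i=2,3$. I would first write out these formulas explicitly, mirroring (\ref{aoalal-2})--(\ref{eq:olole-1-1-1-2-2}).

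\textbf{Non-negativity.} "$\mathcal{T}^{\sigma}$ is non-negative" is read on a bounded set: for $M\geq0$ with $\|M\|<R$, we want $\mathcal{T}^{\sigma}(M)\geq0$. The data terms are non-negative because the data are, and the exponential weights are positive. For the integrands I use the sign structure of $Q$:
\[
Q(M)+\sigma M_{1}=2cS\,M_{2}M_{3}+M_{1}(\sigma-2cSM_{4})\geq M_{1}(\sigma-2cSR)\geq0 .
\]
Similarly,
\[
-Q(M)+\sigma M_{2}=2cS\,M_{1}M_{4}+M_{2}(\sigma-2cSM_{3})\geq0 .
\]
The same holds for the $M_{3}$ and $M_{4}$ integrands. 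So $\sigma\geq2cSR$ suffices, and each $\mathcal{T}^{\sigma}_{i}(M)$ is non-negative.

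\textbf{Fixed points are solutions.} Let $N$ be a fixed point with the stated regularity. Fix a characteristic, e.g. $s\mapsto(s,x+c(s-t),y)$ for $N_{1}$, and set $\phi(s)=e^{\sigma s}N_{1}(s,x+c(s-t),y)$. The fixed point identity, read along this line, gives $\phi'(s)=e^{\sigma s}(Q(N)+\sigma N_{1})$. Expanding the left side by the chain rule, the $\sigma N_{1}$ terms cancel and we get $\partial_{t}N_{1}+c\,\partial_{x}N_{1}=Q(N)$. Initial and boundary conditions follow by evaluating at $t=\tau$ or on the inflow boundary, where the integral vanishes and the exponential equals $1$. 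The compatibility conditions (\ref{eq:kqiiq-1})--(\ref{eq:looqp-1-1}) ensure the two branches glue continuously across $x-c(t-\tau)=a_{1}$. Equivalently, one can check that $N=\mathcal{T}(N)$ and invoke Proposition \ref{prop:Let-the-4-tuple}.

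\textbf{Non-negativity of the fixed point: the main obstacle.} The positivity estimate above needs $M\geq0$ as input, so it does not apply directly to an arbitrary fixed point. I would first try a continuation argument. Let $t^{*}$ be the supremum of times up to which $N\geq0$, and take $\sigma\geq2cS\|N\|$. On $[\tau,t^{*}]$ the fixed point formula with non-negative integrands gives $N\geq0$. On a short interval beyond $t^{*}$, write the negative parts along characteristics; each integrand is bounded below by the linear expression $\sigma N_{i}-2cS\|N\|\,|N_{i}|$ plus products such as $N_{2}N_{3}$. Taking the minimum over components together with a Gronwall estimate should force $\min_{i}\min(N_{i},0)\equiv0$, contradicting maximality of $t^{*}$.

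If this step becomes too delicate, I would instead obtain the fixed point inside $\mathscr{M}^{+}_{R,R'}$. That set is convex and relatively compact (Proposition \ref{prop::odlp-1}) and is mapped into itself by $\mathcal{T}^{\sigma}$ for large $\sigma$, so Schauder's theorem yields a fixed point that is non-negative by construction. Uniqueness of solutions from \cite{arxiv 4V} would then transfer this non-negativity to any fixed point.
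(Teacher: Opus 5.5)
\textbf{The proposal analyzes a different operator from the one in the statement, and this is a genuine gap.} The $\mathcal{T}^{\sigma}$ of \eqref{aoalal}--\eqref{eq:olole-1-1-1-2} is not the constant-damping Duhamel map with integrand $e^{-\sigma(t-s)}\bigl(Q(M)+\sigma M_{1}\bigr)$. It differs in two ways. It is evaluated at $|M|$, and its integrating factor is $e^{-\sigma\int\rho(|M|)}$ taken along the characteristic, with source $Q^{\sigma}_{i}(|M|)$ from \eqref{eq:opaole}. Because this damping is quadratic, like $Q$, one has for instance
\[
Q^{\sigma}_{1}(|M|)=\sigma\bigl(|M_{1}|+|M_{2}|+|M_{3}|\bigr)|M_{1}|+2cS|M_{2}||M_{3}|+(\sigma-2cS)|M_{1}||M_{4}|\geq0 ,
\]
and similarly for $i=2,3,4$. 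This holds as soon as $\sigma\geq2cS$, for every continuous $M$, with no sign condition and no bound $\|M\|<R$. Add the positive exponentials, the non-negative data and the ordered integration limits, and you get $\mathcal{T}^{\sigma}(M)\geq0$ for all $M$. Your version yields non-negativity only for $M\geq0$ with $\|M\|<R$ and $\sigma\geq2cSR$. That is weaker than what is asserted, and it is about a different map.

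\textbf{The step you call the main obstacle is left open.} Your continuation/Gronwall paragraph is only a sketch ("should force"). The Schauder-plus-uniqueness fallback only works on intervals short enough to satisfy the hypotheses of Theorem \ref{thm:dhgrt}, so it cannot cover an arbitrary $[\tau,\tau']$.

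With the actual operator there is no obstacle at all. If $N=\mathcal{T}^{\sigma}(N)$, then $N\geq0$ immediately, because $\mathcal{T}^{\sigma}$ is non-negative on everything; hence $|N|=N$. Now fix a characteristic $s\mapsto(s,x+c(s-t),y)$ and set $\Lambda(s)=\sigma\int^{s}_{\tau}\rho(N)(r,x+c(r-t),y)\,dr$. The fixed-point identity becomes
\[
e^{\Lambda(s)}N_{1}(s,x+c(s-t),y)=N^{\tau}_{1}(x-c(t-\tau),y)+\int^{s}_{\tau}e^{\Lambda(r)}Q^{\sigma}_{1}(N)(r,x+c(r-t),y)\,dr .
\]
Differentiating in $s$ gives $(\partial_{t}+c\partial_{x})N_{1}+\sigma\rho(N)N_{1}=\sigma\rho(N)N_{1}+Q(N)$, which is the first equation of \eqref{eq:koiqmn}. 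The other components, the boundary branches and the initial/boundary conditions then go as in your second paragraph.

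The idea you are missing is that evaluating at $|M|$, together with the $\rho$-weighted damping, makes positivity of fixed points automatic. For your constant-damping map, a Gronwall estimate on the negative parts could probably be completed. Even so, it would prove a statement about a different operator, not Proposition~\ref{prop:pqmm}.
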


Let us recall the explicit expression of the operator $\mathcal{T}^{\sigma}:$
first put 
\begin{equation}
\rho\left(N\right)=\sum^{4}_{i=1}N_{i}\label{eq:kiid}
\end{equation}
and 

\begin{equation}
\begin{cases}
Q^{\sigma}_{1}\left(N\right)=\sigma\rho\left(N\right)N_{1}+Q\left(N\right)\\
Q^{\sigma}_{2}\left(N\right)=\sigma\rho\left(N\right)N_{2}-Q\left(N\right)\\
Q^{\sigma}_{3}\left(N\right)=\sigma\rho\left(N\right)N_{3}-Q\left(N\right)\\
Q^{\sigma}_{4}\left(N\right)=\sigma\rho\left(N\right)N_{4}+Q\left(N\right)
\end{cases}.\label{eq:opaole}
\end{equation}
Put $\left|N\right|=\left(\left|N_{i}\right|\right)^{4}_{i=1}$. We
have $\mathcal{T}^{\sigma}\left(M\right)=\left(\mathcal{T}^{\sigma}_{i}\left(M\right)\right)^{4}_{i=1}$
where 
\begin{equation}
\mathcal{T}^{\sigma}_{1}\left(M\right)\left(t,x,y\right)=\mathcal{T}^{A,\sigma}_{1}\left(M\right)\left(t,x,y\right)\cdot\mathbb{I}_{x-c\left(t-\tau\right)\geq a_{1}}\left(t,x,y\right)+
\mathcal{T}^{B,\sigma}_{1}\left(M\right)\left(t,x,y\right)\cdot\mathbb{I}_{x-c\left(t-\tau\right)\leq a_{1}}\left(t,x,y\right)\label{aoalal}
\end{equation}
with

\begin{multline}
\mathcal{T}^{A,\sigma}_{1}\left(M\right)\left(t,x,y\right)=\Biggl({\displaystyle \int^{t}_{\tau}}e^{\sigma\int^{s}_{\tau}\rho\left(\left|M\right|\right)\left(r,x+c\left(r-t\right),y\right)dr}\cdot Q^{\sigma}_{1}\left(\left|M\right|\right)
\left(s,x+c\left(s-t\right),y\right)ds+\overline{N^{\tau}_{1}}\left(t,x,y\right)\Biggr)\cdot\\
e^{-\sigma{\textstyle {\displaystyle \int^{t}_{\tau}}}\rho\left(\left|M\right|\right)\left(s,x+c\left(s-t\right),y\right)ds}\label{eq:olole-1}
\end{multline}

\begin{multline}
\mathcal{T}^{B,\sigma}_{1}\left(M\right)\left(t,x,y\right)=
\Biggl({\displaystyle \int^{t}_{t-\frac{1}{c}x+\frac{a_{1}}{c}}}e^{\sigma\int^{s}_{t-\frac{1}{c}x+\frac{a_{1}}{c}}\rho\left(\left|M\right|\right)\left(r,x+c\left(r-t\right),y\right)dr}\cdot Q^{\sigma}_{1}\left(\left|M\right|\right)
\left(s,x+c\left(s-t\right),y\right)ds+\overline{N^{-}_{1}}\left(t,x,y\right)\Biggr)\cdot\\
e^{-\sigma{\textstyle \int^{t}_{t-\frac{1}{c}x+\frac{a_{1}}{c}}}\rho\left(\left|M\right|\right)\left(s,x+c\left(s-t\right),y\right)ds}\label{eq:olole-1-1}
\end{multline}
\begin{equation}
\mathcal{T}^{\sigma}_{2}\left(M\right)\left(t,x,y\right)=\mathcal{T}^{A,\sigma}_{2}\left(M\right)\left(t,x,y\right)\cdot\mathbb{I}_{y-c\left(t-\tau\right)\geq a_{2}}\left(t,x,y\right)+
\mathcal{T}^{B,\sigma}_{2}\left(M\right)\left(t,x,y\right)\cdot\mathbb{I}_{y-c\left(t-\tau\right)\leq a_{2}}\left(t,x,y\right)\label{aoalal-1}
\end{equation}

with
\begin{multline}
\mathcal{T}^{A,\sigma}_{2}\left(M\right)\left(t,x,y\right)=\Biggl({\textstyle {\displaystyle \int^{t}_{\tau}}e^{\sigma\int^{s}_{\tau}\rho\left(\left|M\right|\right)\left(r,x,y+c\left(r-t\right)\right)dr}}\cdot
{\textstyle Q^{\sigma}_{2}\left(\left|M\right|\right)\left(s,x,y+c\left(s-t\right)\right)ds}+{\textstyle \overline{N^{\tau}_{2}}\left(t,x,y\right)\Biggr)}\cdot\\
{\textstyle e^{-\sigma{\displaystyle \int^{t}_{\tau}}\rho\left(\left|M\right|\right)\left(s,x,y+c\left(s-t\right)\right)ds}}\label{eq:olole-1-1-1-1-1-1}
\end{multline}

\begin{multline}
\mathcal{T}^{B,\sigma}_{2}\left(M\right)\left(t,x,y\right)=
\Biggl({\textstyle {\textstyle {\displaystyle \int^{t}_{t-\frac{1}{c}y+\frac{a_{2}}{c}}}}e^{\sigma\int^{s}_{t-\frac{1}{c}y+\frac{a_{2}}{c}}\rho\left(\left|M\right|\right)\left(r,x,y+c\left(r-t\right)\right)dr}}\cdot
{\textstyle Q^{\sigma}_{2}\left(\left|M\right|\right)\left(s,x,y+c\left(s-t\right)\right)ds}+{\textstyle \overline{N^{-}_{2}}\left(t,x,y\right)\Biggr)}\cdot\\
{\textstyle e^{-\sigma{\displaystyle \int^{t}_{t-\frac{1}{c}y+\frac{a_{2}}{c}}}\rho\left(\left|M\right|\right)\left(s,x,y+c\left(s-t\right)\right)ds}}\label{eq:olole-1-1-1-1-2}
\end{multline}

\begin{equation}
\mathcal{T}^{\sigma}_{3}\left(M\right)\left(t,x,y\right)=\mathcal{T}^{A,\sigma}_{3}\left(M\right)\left(t,x,y\right)\cdot\mathbb{I}_{y+c\left(t-\tau\right)\leq b_{2}}\left(t,x,y\right)+
\mathcal{T}^{B,\sigma}_{3}\left(M\right)\left(t,x,y\right)\cdot\mathbb{I}_{y+c\left(t-\tau\right)\geq b_{2}}\left(t,x,y\right)\label{aoalal-1-1}
\end{equation}
with 
\begin{multline}
\mathcal{T}^{A,\sigma}_{3}\left(M\right)\left(t,x,y\right)=
\Biggl({\displaystyle \int^{t}_{\tau}}e^{\sigma\int^{s}_{0}\rho\left(\left|M\right|\right)\left(r,x,y-c\left(r-t\right)\right)dr}\cdot Q^{\sigma}_{3}\left(\left|M\right|\right)
\left(s,x,y-c\left(s-t\right)\right)ds+\overline{N^{\tau}_{3}}\left(t,x,y\right)\Biggr)\cdot\\
e^{-\sigma{\textstyle {\displaystyle \int^{t}_{\tau}}}\rho\left(\left|M\right|\right)\left(s,x,y-c\left(s-t\right)\right)ds}\label{eq:olole-1-2}
\end{multline}
\begin{multline}
\mathcal{T}^{B,\sigma}_{3}\left(M\right)\left(t,x,y\right)=
\Biggl({\displaystyle \int^{t}_{t+\frac{1}{c}y-\frac{b_{2}}{c}}}e^{\sigma\int^{s}_{t+\frac{1}{c}y-\frac{b_{2}}{c}}\rho\left(\left|M\right|\right)\left(r,x,y-c\left(r-t\right)\right)dr}\cdot Q^{\sigma}_{3}\left(\left|M\right|\right)
\left(s,x,y-c\left(s-t\right)\right)ds+\overline{N^{+}_{3}}\left(t,x,y\right)\Biggr)\cdot\\
e^{-\sigma{\textstyle {\displaystyle \int^{t}_{t+\frac{1}{c}y-\frac{b_{2}}{c}}}}\rho\left(\left|M\right|\right)\left(s,x,y-c\left(s-t\right)\right)ds}\label{eq:olole-1-1-1}
\end{multline}

\begin{equation}
\mathcal{T}^{\sigma}_{4}\left(M\right)\left(t,x,y\right)=\mathcal{T}^{A,\sigma}_{4}\left(M\right)\left(t,x,y\right)\cdot\mathbb{I}_{x+c\left(t-\tau\right)\leq b_{1}}\left(t,x,y\right)+
\mathcal{T}^{B,\sigma}_{4}\left(M\right)\left(t,x,y\right)\cdot\mathbb{I}_{x+c\left(t-\tau\right)\geq b_{1}}\left(t,x,y\right)\label{aoalal-1-1-1}
\end{equation}
with 
\begin{multline}
\mathcal{T}^{A,\sigma}_{4}\left(M\right)\left(t,x,y\right)=
\Biggl({\displaystyle \int^{t}_{\tau}}e^{\sigma\int^{s}_{\tau}\rho\left(\left|M\right|\right)\left(r,x-c\left(r-t\right),y\right)dr}\cdot Q^{\sigma}_{4}\left(\left|M\right|\right)
\left(s,x-c\left(s-t\right),y\right)ds+\overline{N^{\tau}_{4}}\left(t,x,y\right)\Biggr)\cdot\\
e^{-\sigma{\textstyle {\displaystyle \int^{t}_{\tau}}}\rho\left(\left|M\right|\right)\left(s,x-c\left(s-t\right),y\right)ds}\label{eq:olole-1-2-1}
\end{multline}
\begin{multline}
\mathcal{T}^{B,\sigma}_{4}\left(M\right)\left(t,x,y\right)=
\Biggl({\displaystyle \int^{t}_{t+\frac{1}{c}x-\frac{b_{1}}{c}}}e^{\sigma\int^{s}_{t+\frac{1}{c}x-\frac{b_{1}}{c}}\rho\left(\left|M\right|\right)\left(r,x-c\left(r-t\right),y\right)dr}\cdot Q^{\sigma}_{4}\left(\left|M\right|\right)
\left(s,x-c\left(s-t\right),y\right)ds+\overline{N^{+}_{4}}\left(t,x,y\right)\Biggr)\cdot\\
e^{-\sigma{\textstyle {\displaystyle \int^{t}_{t+\frac{1}{c}x-\frac{b_{1}}{c}}}}\rho\left(\left|M\right|\right)\left(s,x-c\left(s-t\right),y\right)ds}\label{eq:olole-1-1-1-2}
\end{multline}

The uniqueness of the solution to problems $\Sigma_{\tau,\tau'}$
and $\Sigma$ is established in \cite{arxiv 4V} for any value of
the data. This is achieved by using the operator $\mathcal{T}$. 

In \cite{arxiv 4V} the data are assumed to be non-negative; and we
have proved the existence of a non-negative global bounded solution
with its derivatives, for data which are small enough with their derivatives.
We have particularly established that the uniform bounds of the solution
and its derivatives do not exceed the ones of the data. 

In the present paper, we complete the study undertaken in \cite{arxiv 4V},
by addressing the more general case where the data and their derivatives
can possess any arbitrary bound. First, we use the operator $\mathcal{T}$
to prove that the solution to the problem $\Sigma$ is positive when
the data is positive. 

Then we mainly prove that the solution to the problem $\Sigma$ exists
and is non-negative for any non-negative value of the data. Particularly
we established that the solution is bounded and global. The unknown
feature that remains to be established is whether the derivatives
of the solution are also bounded for all time. Recall that in \cite{arxiv 4V},
we have proven that the derivatives of the solution are bounded in
time, when the data and their derivatives are sufficiently small.

\begin{notation}
Put 
\begin{equation}
q\equiv\max_{1\leq i\leq4}\Biggl\{\left\Vert \overline{N^{\tau}_{i}}\right\Vert _{\infty},\left\Vert \overline{N^{-}_{1}}\right\Vert _{\infty},\left\Vert \overline{N^{-}_{2}}\right\Vert _{\infty},\left\Vert \overline{N^{+}_{3}}\right\Vert _{\infty},\left\Vert \overline{N^{+}_{4}}\right\Vert _{\infty}\Biggr\}\label{bghghf}
\end{equation}
and 
\begin{equation}
q'\equiv\max_{1\leq i\leq4}\left\{ \left\Vert \nabla_{t,x,y}\overline{N^{\tau}_{i}}\right\Vert ,\left\Vert \nabla_{t,x,y}\overline{N^{-}_{1}}\right\Vert ,\left\Vert \nabla_{t,x,y}\overline{N^{-}_{2}}\right\Vert ,\left\Vert \nabla_{t,x,y}\overline{N^{+}_{3}}\right\Vert ,\left\Vert \nabla_{t,x,y}\overline{N^{+}_{4}}\right\Vert \right\} .\label{oloo-1}
\end{equation}

Let $R_{0}>0$ be given. Put 
\begin{equation}
f\left(q\right)\equiv\dfrac{1}{4\sigma\left(1+c\right)\left(4\sigma+4cS\right)R^{2}_{0}+\left(1+c\right)\left(8\sigma+8cS\right)R_{0}+16\left(\sigma+cS\right)\left(1+c\right)q}\label{ldod}
\end{equation}
\begin{equation}
g\left(q\right)\equiv\dfrac{R_{0}-q}{\left(8\sigma+8cS\right)R^{2}_{0}}\label{loodie}
\end{equation}
\end{notation}

In the sequel, we prove the following results
\begin{thm}
\label{thm:If-the-data}If the data are positive, so is the solution
to the problems $\Sigma$ and $\Sigma_{\tau,\tau'}$.
\end{thm}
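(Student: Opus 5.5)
We prove that a $C^{1}$ solution $N$ of $\Sigma_{\tau,\tau'}$ with positive data is positive; the statement for $\Sigma$ then follows by applying this on $[0,T]$ for every $T>0$. The idea is to rewrite each equation of \eqref{eq:koiqmn} along its characteristic with an integrating factor, so that the loss term $N_1N_4$ (resp.\ $N_2N_3$) becomes a damping factor. This turns $\mathcal{T}$ into an equivalent representation in which $N_i$ equals a positive datum times an exponential plus an integral of a gain term. A continuity (first-exit-time) argument then rules out the first vanishing of any component.

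\textbf{Step 1 (representation along characteristics).} For $N_1$, set $\lambda_1(t,x,y)\equiv 2cS\,N_4(t,x,y)$, which is continuous and bounded on $\mathscr{P}$. Along $s\mapsto(s,x+c(s-t),y)$ the first equation reads $\frac{d}{ds}N_1=2cS N_2N_3-\lambda_1N_1$. Multiplying by $e^{\int\lambda_1}$ and integrating from the entry point of the characteristic gives
\[
N_1(t,x,y)=D_1\,e^{-\int_{s_0}^{t}\lambda_1\,dr}+\int_{s_0}^{t}e^{-\int_{s}^{t}\lambda_1\,dr}\,2cS\,(N_2N_3)(s,x+c(s-t),y)\,ds .
\]
Here $s_0=\tau$ and $D_1=\overline{N^{\tau}_1}(t,x,y)$ in the region $x-c(t-\tau)\geq a_1$. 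Otherwise $s_0=t-\frac{x-a_1}{c}$ and $D_1=\overline{N^{-}_1}(t,x,y)$; this is exactly the splitting in \eqref{aoalal-2}. By Proposition \ref{prop:Let-the-4-tuple}, $N$ satisfies the integral equations of $\mathcal{T}$, and the formula above is an algebraic rearrangement of them (variation of constants along the same characteristics). The analogous formulas hold for $N_2,N_3$ with damping rate $2cS N_1$ and gain $2cS N_1N_4$, and for $N_4$ with damping $2cS N_1$ and gain $2cS N_2N_3$.

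\textbf{Step 2 (first-exit argument).} By positivity of the data and compactness, each datum is at least some $\delta>0$ on its (compact) domain. The exponential factors are positive, since the rates are merely bounded, with no sign assumed. Define $t^{*}\equiv\sup\{t\in[\tau,\tau']:\ N_i>0 \text{ on } [\tau,t]\times[a_1,b_1]\times[a_2,b_2]\ \forall i\}$. Then $t^{*}>\tau$ by continuity, since $N(\tau,\cdot)=N^{\tau}>0$ on a compact set. On $[\tau,t^{*}]$ all gain terms $N_2N_3$ and $N_1N_4$ are $\geq0$, so Step 1 gives
\[
N_i(t,x,y)\geq \delta\, e^{-2cS\|N\|(\tau'-\tau)}>0 \quad\text{on } [\tau,t^{*}]\times[a_1,b_1]\times[a_2,b_2].
\]
This uniform positive lower bound at $t^{*}$, together with continuity, contradicts maximality unless $t^{*}=\tau'$. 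Hence $N>0$ on $\mathscr{P}$.

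\textbf{Main obstacle.} The delicate point is the rigorous justification of Step 1: the characteristic representation must be derived from the $\mathcal{T}$-fixed-point form, including on the interface $x-c(t-\tau)=a_1$ where the two branches meet, which the compatibility conditions \eqref{eq:kqiiq-1}--\eqref{eq:looqp-1-1} ensure. I would do this by differentiating $s\mapsto N_1(s,x+c(s-t),y)$, which is legitimate since $N$ has first-order partial derivatives and solves $\Sigma_{\tau,\tau'}$, and solving the resulting linear ODE. A lesser point is that the lower bound must be uniform in $(x,y)$ for the continuity step to work. That uniformity comes from $\|N\|<\infty$, because $N$ is continuous on compact $\mathscr{P}$.
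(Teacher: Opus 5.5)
Your proof is correct, and it takes a genuinely different route from the paper.

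\textbf{The paper's route.} The paper keeps the original operator $\mathcal{T}$ and bounds the collision term only by its size, $|Q(M)|\le 4cSR^{2}$ with $R=\|M^{0,T}\|$. This gives $\mathcal{T}_i(M)\ge \min(\text{data})-(\text{step length})\cdot 4cSR^{2}$. The paper then marches forward in steps of length $\frac{1}{8cSR^{2}}$ times the current minimum of the data. It closes by asserting that if the marching stalls at some $T'<T$, then $M>0$ on $[0,T']$ ``by continuity'', so one can restart.

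\textbf{Your route.} You absorb the loss term into an integrating factor, essentially $\mathcal{T}^{\sigma}$ with the true loss rate in place of $\sigma\rho$. Then only the sign of the gain term matters, and you finish with a first-exit-time argument.

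\textbf{What each buys.} Your route gives the uniform bound $N_i\ge\delta e^{-2cS\|N\|(\tau'-\tau)}$, independent of how far one has marched. That is exactly what the paper's stalling step needs. At $T'$, continuity alone only gives $M(T',\cdot)\ge 0$. Estimates that use nothing about $Q$ beyond $|Q|\le 4cSR^{2}$ only guarantee that the minimum at most halves per step, so the step lengths may sum to a finite time. Indeed, a system whose source is a negative constant satisfies all of the paper's estimates, yet its solution vanishes exactly at the stalling time $T'$. Your argument is the one that actually exploits the gain--loss structure of $Q$. The paper's is shorter and reuses $\mathcal{T}$ directly, but it leans on that unproved continuity step.

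\textbf{Two small corrections.}
\begin{itemize}
\item The damping rates for $N_{2}$ and $N_{3}$ are $2cSN_{3}$ and $2cSN_{2}$ respectively, not $2cSN_{1}$. This is harmless, since you only use $|\lambda_i|\le 2cS\|N\|$.
\item In your last paragraph, mere existence of first-order partial derivatives does not justify the chain rule for $s\mapsto N_1(s,x+c(s-t),y)$. The rigorous justification is the one you also mention. Since $x-c(t-\tau)$ is constant along the characteristic, the characteristic stays in one branch, with fixed entry time $s_0$ and fixed datum $D_1$. Evaluating the fixed-point identity for $\mathcal{T}$ along it gives $u(s)\equiv N_1(s,x+c(s-t),y)=D_1+\int_{s_0}^{s}Q(N)(r,x+c(r-t),y)\,dr$. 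Hence $u$ is $C^{1}$ with $u'=2cS\,N_2N_3-\lambda_1u$, and variation of constants yields your formula. The interface between branches is then handled automatically by the compatibility conditions.
\end{itemize}
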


\begin{thm}
\label{thm:dhgrt}Let $q<R_{0}$ and $\tau'-\tau\leq\min\left\{ 1,f\left(q\right),g\left(q\right)\right\} .$
Then the problem $\Sigma_{\tau,\tau'}$ admits an unique continuous
solution $N$ which is non-negative with bounded first-order partial
derivatives. Moreover $N$ satisfies 
\begin{equation}
\left\Vert N\right\Vert <R_{0}\label{iido}
\end{equation}
and \textup{
\begin{equation}
\max\left\{ \left\Vert \dfrac{\partial N}{\partial t}\right\Vert ,\left\Vert \dfrac{\partial N}{\partial x}\right\Vert ,\left\Vert \dfrac{\partial N}{\partial y}\right\Vert \right\} <
\dfrac{\left(8+\frac{4}{c}\right)\left(\sigma+cS\right)R^{2}_{0}+4\sigma\left(4\sigma+4cS\right)R^{3}_{0}+\left(8+\frac{4}{c}\right)\sigma R_{0}q+q'}{1-\left(\tau'-\tau\right)\left[4\sigma\left(1+c\right)\left(4\sigma+4cS\right)R^{2}_{0}+\left(1+c\right)\left(8\sigma+8cS\right)R_{0}+4\sigma\left(1+c\right)q\right]}.
\end{equation}
}
\end{thm}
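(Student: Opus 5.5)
The strategy is a Schauder-type fixed-point argument for the operator $\mathcal{T}^{\sigma}$, with $\sigma$ fixed large enough that Proposition \ref{prop:pqmm} applies. The setting is Proposition \ref{prop::odlp-1} with $K=\mathscr{P}$ ($d=3$) and $\mathcal{H}$ the four characteristic planes $x-c(t-\tau)=a_{1}$, $y-c(t-\tau)=a_{2}$, $y+c(t-\tau)=b_{2}$, $x+c(t-\tau)=b_{1}$, across which the first derivatives of $\mathcal{T}^{\sigma}(M)$ may jump. Set $R=R_{0}$ and let $R'$ be the right-hand side of the derivative estimate in the statement. We work on the convex set $\overline{\mathscr{M}^{+}_{R_{0},R'}}$, the closure in $\left(C\left(\mathscr{P},\R\right)^{4},\left\Vert \cdot\right\Vert \right)$, which is compact by Proposition \ref{prop::odlp-1}. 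The plan has three steps: show $\mathcal{T}^{\sigma}$ maps $\mathscr{M}^{+}_{R_{0},R'}$ into itself, conclude by continuity that it maps the closure into itself, and apply Schauder's theorem. Proposition \ref{prop:pqmm} then turns the fixed point into a non-negative solution of $\Sigma_{\tau,\tau'}$, and uniqueness is the result of \cite{arxiv 4V} obtained via $\mathcal{T}$.

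\textbf{Sup bound.} Let $M\in\mathscr{M}^{+}_{R_{0},R'}$, so $|M|=M$ and $\left\Vert M\right\Vert <R_{0}$. On each branch $A$ or $B$, $\mathcal{T}^{\sigma}_{i}(M)$ is a data term multiplied by an exponential factor $\le1$, plus an integral over an interval of length at most $\tau'-\tau$. In that integral the exponential weight ratio is $\le1$, and $|Q^{\sigma}_{i}(M)|\le4\sigma R_{0}^{2}+4cSR_{0}^{2}$. Hence
\[
\left\Vert \mathcal{T}^{\sigma}(M)\right\Vert \le q+(\tau'-\tau)(4\sigma+4cS)R_{0}^{2}.
\]
The condition $\tau'-\tau\le g(q)$ makes the right-hand side at most $q+(R_{0}-q)/2<R_{0}$. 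Non-negativity of $\mathcal{T}^{\sigma}(M)$ is the first part of Proposition \ref{prop:pqmm}.

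\textbf{Derivative bound.} On each open region between the planes of $\mathcal{H}$, differentiate the explicit formulas (\ref{eq:olole-1})--(\ref{eq:olole-1-1-1-2}) in $t$, $x$ and $y$. The derivatives fall into three kinds of terms.

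First, boundary terms come from differentiating the variable lower limit $t-\frac{x-a_{1}}{c}$ (and its analogues). These carry a factor $1$ or $1/c$ and produce contributions like $(1+\frac{1}{c})\cdot(\text{integrand at the endpoint})$, of order $R_{0}^{2}(\sigma+cS)$ or $\sigma R_{0}q$. This is the source of the $(8+\frac{4}{c})$ coefficients.

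Second, terms where the derivative falls on the data give $q'$.

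Third, terms where the derivative falls on $M$ inside $Q^{\sigma}_{i}$ or inside the exponents are bounded by
\[
(\tau'-\tau)\,(1+c)\Bigl[\text{const}\cdot(\sigma+cS)R_{0}+\sigma(\sigma+cS)R_{0}^{2}+\sigma q\Bigr]\,R'.
\]
The factor $(1+c)$ appears because the characteristic shifts such as $x+c(s-t)$ turn a $t$-derivative into $\partial_{t}+c\,\partial_{x}$. Cross terms in which an integral of $\sigma\rho$ multiplies an integral of $Q^{\sigma}$ are reduced using $\tau'-\tau\le1$; this is where the $4\sigma(4\sigma+4cS)R_{0}^{3}$ term comes from.

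Collecting the three kinds of terms gives
\[
\max_{\alpha}\left\Vert \partial_{\alpha}\mathcal{T}^{\sigma}(M)\right\Vert \le A+(\tau'-\tau)B\,R',
\]
where $A$ is the numerator and $B$ the bracket in the denominator of the stated bound. Since $R'=A/(1-(\tau'-\tau)B)$, we have $A+(\tau'-\tau)BR'=R'$, so equality holds exactly at $R'$. To get membership in the open set we need a strict inequality, which I would obtain from the strict bound $\left\Vert \partial M\right\Vert <R'$. The condition $\tau'-\tau\le f(q)$ guarantees $(\tau'-\tau)B<1$, since $1/f(q)$ dominates $B$ (its $q$-coefficient $16(\sigma+cS)(1+c)$ exceeds $4\sigma(1+c)$). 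So $R'>0$ is well defined and $\mathcal{T}^{\sigma}(\mathscr{M}^{+}_{R_{0},R'})\subset\mathscr{M}^{+}_{R_{0},R'}$.

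\textbf{Limit and main obstacle.} Continuity of $\mathcal{T}^{\sigma}$ gives invariance of the closure. The fixed point $N$ lies in the closure, so its derivative bounds hold non-strictly. Because $N=\mathcal{T}^{\sigma}(N)$, I then upgrade them: the formulas show that $\mathcal{T}^{\sigma}(N)$ is $C^{1}$ off the planes of $\mathcal{H}$, with the strict bounds recovered from the $A$-terms. The compatibility conditions (\ref{eq:kqiiq-1})--(\ref{eq:looqp-1-1}) give continuity across the characteristic planes.

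The main obstacle is this closure step. One must check that a uniform limit of functions with uniformly bounded derivatives still has everywhere-defined first partial derivatives, as Proposition \ref{prop:pqmm} requires. I would first try to prove this by differentiating the fixed-point identity directly: the right-hand side involves $N$ only through integrals along characteristics, plus data. A secondary difficulty is the careful bookkeeping of constants in the derivative bound.
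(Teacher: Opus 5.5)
Your architecture is the paper's. Both arguments apply Schauder's theorem to $\mathcal{T}^{\sigma}$ on a set $\mathscr{M}^{+}_{R,R'}$, use the estimates of Proposition \ref{prop:Let--be}, and choose $R'$ so that $A+(\tau'-\tau)BR'=R'$, with $(\tau'-\tau)B<1$ supplied by $f(q)$. Both then conclude with Proposition \ref{prop:pqmm} and take uniqueness from \cite{arxiv 4V}. The only difference in the estimates is the choice of $R$. The paper takes the smaller root of $(\tau'-\tau)(4\sigma+4cS)R^{2}-R+q=0$ and spends $f(q)$ and $g(q)$ on checking $R<R_{0}$. You take $R=R_{0}$ and read off $\left\Vert \mathcal{T}^{\sigma}(M)\right\Vert \le (R_{0}+q)/2$ directly from $g(q)$. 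That is simpler, and the uniform gap below $R_{0}$ is what lets you recover strict inequalities at the end. (Strictness of $(\tau'-\tau)B<1$ needs $q>0$, in your argument as in the paper's.)

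The closure step is a genuine gap, and your proposed way through it does not work as written. The paper avoids it only by applying Schauder directly to $\mathscr{M}^{+}_{R,R'}$, which is convex and relatively compact but not closed. That is not a valid use of the theorem: $x\mapsto x/2$ on $]0,1]$ has no fixed point.

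Passing to the closure is the right correction, but the closure contains non-differentiable elements, e.g.\ uniform limits such as $\sqrt{(y-y_{0})^{2}+1/n}\to|y-y_{0}|$. So the property you propose to check is false in general. The fixed point $N$ is a priori only non-negative, bounded, and Lipschitz with constant $R'$ in each variable.

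Nor can you read differentiability off the formulas. $\mathcal{T}^{\sigma}$ regularizes only along characteristics: $\partial_{y}\mathcal{T}^{\sigma}_{1}(N)$ needs $\partial_{y}N_{j}$ on the line $(s,x+c(s-t),y)$, and a $y$-kink in $N_{2}$ passes straight into $\mathcal{T}^{\sigma}_{1}(N)$. Differentiating the fixed-point identity therefore presupposes what is to be shown.

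What closes the gap is a bootstrap through the linearized system.
\begin{itemize}
\item The formally differentiated identity reads $V=\mathcal{L}_{N}V+G_{N}$ for $V=(\partial_{t}N,\partial_{x}N,\partial_{y}N)$. It is linear in $V$, with $\left\Vert \mathcal{L}_{N}\right\Vert \le(\tau'-\tau)B<1$, the coefficient of $R'$ in \eqref{kdid}.
\item It therefore has a unique bounded solution, continuous off the planes. It satisfies $\left\Vert V\right\Vert \le A_{N}/(1-(\tau'-\tau)B)<R'$, where $A_{N}$ is $A$ with $\left\Vert N\right\Vert \le (R_{0}+q)/2$ in place of $R_{0}$.
\item Next, show that the difference quotients $\Delta_{h}N$ satisfy the same equation up to an error tending to $0$. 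The quadratic remainders of $Q^{\sigma}_{i}$ and of the exponentials are $O(h)$ by the Lipschitz bound.
\item The portions of a characteristic lying within $O(h)$ of another component's discontinuity plane have length $O(h)$, because those crossings are transversal.
\item The contraction then forces $\Delta_{h}N\to V$ away from the planes. This gives the first-order derivatives that Proposition \ref{prop:pqmm} requires, together with the strict bound.
\end{itemize}
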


\begin{thm}
\label{thm:The-problem-} The problem $\Sigma$ admits an unique non-negative
continuous and bounded solution $N.$ More ever $N$ satisfies 
\begin{equation}
\left\Vert N\right\Vert \leq{\displaystyle \max_{1\leq i\leq4}}\left\{ \right.\left\Vert N^{0}_{i}\right\Vert _{\infty},\left\Vert N^{-}_{1}\right\Vert _{\infty},\left\Vert N^{-}_{2}\right\Vert _{\infty},\left\Vert N^{+}_{3}\right\Vert _{\infty},\left\Vert N^{+}_{4}\right\Vert _{\infty}\left.\right\} .
\end{equation}
\end{thm}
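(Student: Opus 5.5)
The plan is to build the global solution of $\Sigma$ by concatenating local solutions given by Theorem \ref{thm:dhgrt}, with a uniform $L^\infty$ bound that keeps the admissible step length from shrinking. Put
\[
q_{0}\equiv\max_{1\leq i\leq4}\left\{ \left\Vert N^{0}_{i}\right\Vert _{\infty},\left\Vert N^{-}_{1}\right\Vert _{\infty},\left\Vert N^{-}_{2}\right\Vert _{\infty},\left\Vert N^{+}_{3}\right\Vert _{\infty},\left\Vert N^{+}_{4}\right\Vert _{\infty}\right\}
\]
and fix $R_{0}>q_{0}$, for instance $R_{0}=2q_{0}+1$. Since $f$ and $g$ are decreasing in $q$, every $q\leq q_{0}$ satisfies $\min\{1,f(q),g(q)\}\geq\delta\equiv\min\{1,f(q_{0}),g(q_{0})\}>0$. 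With $\tau_{k}=k\delta$, I will solve $\Sigma_{\tau_{k},\tau_{k+1}}$ successively, taking as initial datum $N^{\tau_{k}}$ the trace at $t=\tau_{k}$ of the solution on the previous slab.

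\textbf{Step 1: maximum principle.} The crucial point is that the solution on each slab satisfies $\left\Vert N\right\Vert \leq\max\{\left\Vert N^{\tau}\right\Vert ,\text{boundary sup norms}\}$. This makes the quantity $q$ at every restart at most $q_{0}$, so the same $\delta$ works at every step. I expect this to be the main obstacle, since the collision term $Q$ is not sign-definite on each component and Theorem \ref{thm:dhgrt} only gives $\left\Vert N\right\Vert <R_{0}$.

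\textbf{Step 2: proving the maximum principle.} I would use nonnegativity from Theorem \ref{thm:If-the-data} and Proposition \ref{prop:pqmm} together with the conservation structure: $N_{1}+N_{2}$, $N_{1}+N_{3}$, $N_{4}+N_{2}$ and $N_{4}+N_{3}$ are invariant under collisions. The goal is to show that no $N_{i}$ can exceed $M\equiv q_{0}$. My first attempt is a comparison argument on the linearized form
\[
\partial_{t}N_{1}+c\partial_{x}N_{1}=2cS N_{2}N_{3}-2cS N_{4}N_{1}.
\]
Integrating along characteristics with the integrating factor $e^{2cS\int N_{4}}$, as in $\mathcal{T}^{\sigma}$, expresses $N_{1}$ as a convex-type combination of its datum and of $N_{2}N_{3}/N_{4}$. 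To close this, I would look at the first time some component reaches the level $M+\varepsilon$: at such a point the gain term would have to exceed the loss term. I would also try to exploit the fact that the relevant products balance, via the equilibrium $N_{1}N_{4}=N_{2}N_{3}$, to reach a contradiction, possibly comparing the pair $(N_{1},N_{4})$ against $(N_{2},N_{3})$. If this direct argument fails, a fallback is the invariant-region technique for $\mathcal{T}^{\sigma}$: show that $\mathcal{T}^{\sigma}$ maps the set $\{0\leq M_{i}\leq q_{0}\}$ into itself for large $\sigma$, by checking that $Q^{\sigma}_{i}(|M|)\leq\sigma\rho(|M|)q_{0}$ on that set, and then use the explicit exponential formula to get a convex combination bounded by $q_{0}$.

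\textbf{Step 3: gluing.} Once the invariance holds, each slab solution exists by Theorem \ref{thm:dhgrt}. The compatibility conditions at $t=\tau_{k}$ hold automatically, because the previous solution satisfies the boundary conditions. The derivatives stay bounded on each slab, although not uniformly in $k$, which is consistent with the statement. The concatenated function is $C^{1}$ across $t=\tau_{k}$ since both pieces solve the same system with the same trace, and equality of the time derivatives follows from the equations. Uniqueness on $[0,\infty)$ comes from the uniqueness on each $\Sigma_{\tau,\tau'}$ established in \cite{arxiv 4V}. Nonnegativity and the bound $\left\Vert N\right\Vert \leq q_{0}$ then hold globally.
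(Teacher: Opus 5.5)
\textbf{Gap: the maximum principle your proof rests on is false.} Your architecture is the same as the paper's: slabs from Theorem~\ref{thm:dhgrt}, restart from the trace, compatibility inherited from the boundary conditions, gluing, and uniqueness slab by slab. You also correctly identify that everything hinges on the restart quantity never exceeding $q_{0}$. That maximum principle (your Steps 1--2) is not proved, and it cannot be, because it fails for this system. Take constant data $N^{0}=(1,1,1,0)$, $N^{-}_{1}=N^{-}_{2}=N^{+}_{3}=1$, $N^{+}_{4}=0$. These are compatible, non-negative and $C^{1}$ (replace $0$ by $\varepsilon>0$ if you want positive data), and the right-hand side of the claimed bound is $1$. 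Let $\bar N(t)$ solve the homogeneous system $\bar N_{1}'=\bar N_{4}'=Q(\bar N)$, $\bar N_{2}'=\bar N_{3}'=-Q(\bar N)$ with $\bar N(0)=(1,1,1,0)$.
\begin{itemize}
\item On the shrinking box $\Omega_{t}=\{a_{1}+ct\le x\le b_{1}-ct,\ a_{2}+ct\le y\le b_{2}-ct\}$, every backward characteristic of every family stays in $\Omega_{s}$ down to $s=0$. It therefore uses only the initial datum (the $\mathcal{T}^{A}_{i}$ branches).
\item Hence $\sup_{\Omega_{t}}|N-\bar N|\le L\int_{0}^{t}\sup_{\Omega_{s}}|N-\bar N|\,ds$, and Gronwall gives $N=\bar N$ on $\Omega_{t}$.
\item Since $\bar N_{1}'(0)=2cS>0$, you get $N_{1}>1$ at interior points for small $t>0$. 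The homogeneous solution even relaxes to $\bar N_{1}=4/3$.
\end{itemize}
Both of your routes break exactly here. The integrating-factor formula writes $N_{1}$ as an average of its datum and $N_{2}N_{3}/N_{4}$, and that ratio is not bounded by $q_{0}$. The invariant-region check $Q^{\sigma}_{1}(M)\le\sigma\rho(M)q_{0}$ fails at $M=(q_{0},q_{0},q_{0},0)$ for every $\sigma$: the left side is $3\sigma q_{0}^{2}+2cSq_{0}^{2}$ and the right side is $3\sigma q_{0}^{2}$.

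\textbf{The paper does not supply this step either.} Its proof uses steps $S_{n}=\min\{1,f(q_{n}),g(q_{n})\}$ and needs $q_{n}\le q_{0}$ to keep $g(q_{n})$ away from $0$. It derives this from ``$q_{n}<R_{0}$ for every $R_{0}>q_{0}$'', which does not follow, because the partition $(\tau_{n})$, and hence $q_{n}$, depends on $R_{0}$. In the example above, take $R_{0}$ close to $q_{0}$. Each slab satisfies $\|N\|<R_{0}$, so by uniqueness the slabs can never pass a time where $\|N\|>R_{0}$. Therefore $q_{n}\to R_{0}$ and the $\tau_{n}$ accumulate at a finite time.

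\textbf{What is actually missing.} A slab argument genuinely needs an a priori bound on $\sup_{[0,T]}\|N(t)\|$ for each finite $T$, possibly growing with $T$. Without it, enlarging $R_{0}$ at each restart only gives admissible steps of order $R_{0}^{-2}$ (from $f$), while $R_{0}$ may double at every restart, and such steps are summable. Neither your proposal nor the paper provides this estimate. So global existence is not established by this route, and the stated $L^{\infty}$ bound is false.
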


\section{Positivity of the solution}\label{sec:Positiveness-of-the}

\begin{proof}[\textbf{Proof of theorem \ref{thm:If-the-data}}]
Let $M$ be a continuous maximal solution to the problem $\Sigma.$
The restriction of $M$ to $\left[\tau,\tau'\right]\times\left[a_{1},b_{1}\right]\times\left[a_{2},b_{2}\right]$
is denoted by $M^{\tau,\tau'},$ for any $0\leq\tau<\tau'.$ we also
denote by $\mathcal{T}^{\tau,\tau'}$ the operator $\mathcal{T}$
in Proposition \eqref{prop:Let-the-4-tuple}. Let $T>0$ such that
$M$ is defined on $\left[0;T\right]$. Put $R=\left\Vert M^{0,T}\right\Vert .$
$R>0.$

Let $0<\tau_{1}\leq T.$ $M^{0,\tau_{1}}$ is solution to the problem
$\Sigma_{0,\tau_{1}}$ where the initial data are $N^{0}_{i}.$ For
$\left(t,x,y\right)\in\left[0,\tau_{1}\right]\times\left[a_{1},b_{1}\right]\times\left[a_{2},b_{2}\right]$,
we have (\eqref{eq:olole-1-3})
\begin{align}
\mathcal{T}^{0,\tau_{1},A}_{1}\left(M^{0,\tau_{1}}\right)\left(t,x,y\right) & =tQ\left(M^{0,\tau_{1}}\right)\left(s_{0},x+c\left(s_{0}-t\right),y\right)+\overline{N^{0}_{1}}\left(t,x,y\right)\label{eq:olole-1-3-1}
\end{align}
where $s_{0}\in\left[0,\tau_{1}\right]$ . We have $\left|Q\left(M^{0,\tau_{1}}\right)\right|\leq4cSR^{2}$
(\eqref{eq:jjdpoa}). Then 
\begin{equation}
\left|tQ\left(M^{0,\tau_{1}}\right)\left(s_{0},x+c\left(s_{0}-t\right),y\right)\right|\leq\tau_{1}4cSR^{2}
\end{equation}
and (\eqref{eq:kqiiq-1-1})
\begin{equation}
-\tau_{1}4cSR^{2}+\min_{\left[a_{1},b_{1}\right]\times\left[a_{2},b_{2}\right]}N^{0}_{1}\leq\mathcal{T}^{0,\tau_{1},A}_{1}\left(M^{0,\tau_{1}}\right)\left(t,x,y\right).
\end{equation}
Recall (for example \cite{arxiv 4V} proof of Prop. 4.1) that the
bounds of each integral in the formulas \eqref{eq:olole-1-3}-\eqref{eq:olole-1-1-1-2-2}
are in the interval $\left[\tau,\tau'\right]$ . We then have (\eqref{eq:kqiiq-1-1-1})
\begin{equation}
-\tau_{1}4cSR^{2}+\min_{\left[0,T\right]\times\left[a_{2},b_{2}\right]}N^{-}_{1}\leq\mathcal{T}^{0,\tau_{1},B}_{1}\left(M^{0,\tau_{1}}\right)\left(t,x,y\right).
\end{equation}
We establish that similar relations hold for $\mathcal{T}^{0,\tau_{1},A}_{i}$
and $\mathcal{T}^{0,\tau_{1},B}_{i}$ ($i=2,3,4$). Then we see that,
if 
\begin{equation}
T<\frac{1}{4cSR^{2}}\min_{1\leq i\leq4}\left\{ \right.\min_{\left[a_{1},b_{1}\right]\times\left[a_{2},b_{2}\right]}N^{0}_{i},\min_{\left[0,T\right]\times\left[a_{2},b_{2}\right]}N^{-}_{1},
\min_{\left[0,T\right]\times\left[a_{1},b_{1}\right]}N^{-}_{2},\min_{\left[0,T\right]\times\left[a_{1},b_{1}\right]}N^{+}_{3},\min_{\left[0,T\right]\times\left[a_{2},b_{2}\right]}N^{+}_{4}\left.\right\} \label{mpod}
\end{equation}
then $\mathcal{T}^{0,T}\left(M^{0,T}\right)=M^{0,T}>0.$ If \eqref{mpod}
does not hold, then taking 
\begin{equation}
\tau_{1}=\frac{1}{8cSR^{2}}\min_{1\leq i\leq4}\left\{ \right.\min_{\left[a_{1},b_{1}\right]\times\left[a_{2},b_{2}\right]}N^{0}_{i},\min_{\left[0,T\right]\times\left[a_{2},b_{2}\right]}N^{-}_{1},
\min_{\left[0,T\right]\times\left[a_{1},b_{1}\right]}N^{-}_{2},\min_{\left[0,T\right]\times\left[a_{1},b_{1}\right]}N^{+}_{3},\min_{\left[0,T\right]\times\left[a_{2},b_{2}\right]}N^{+}_{4}\left.\right\} \label{mpod-1}
\end{equation}
yields $\mathcal{T}^{0,\tau_{1}}\left(M^{0,\tau_{1}}\right)=M^{0,\tau_{1}}>0.$

Let $\tau_{2}$such that $\tau_{1}<\tau_{2}\leq T.$ Consider the
problem $\Sigma_{\tau_{1},\tau_{2}}$ where the initial data are $N^{\tau_{1}}\equiv M^{0,\tau_{1}}\left(\tau_{1},\cdot,\cdot\right).$
We have as previously, for $i=1,2,3,4$
\begin{equation}
-\left(\tau_{2}-\tau_{1}\right)4cSR^{2}+\min_{\left[a_{1},b_{1}\right]\times\left[a_{2},b_{2}\right]}N^{\tau_{1}}_{i}\leq\mathcal{T}^{\tau_{1},\tau_{2},A}_{i}\left(M^{\tau_{1},\tau_{2}}\right)\left(t,x,y\right)
\end{equation}
and similar relations for $\mathcal{T}^{\tau_{1},\tau_{2},B}_{i}.$
Where from if 
\begin{equation}
T-\tau_{1}<\frac{1}{4cSR^{2}}\min_{1\leq i\leq4}\left\{ \right.\min_{\left[a_{1},b_{1}\right]\times\left[a_{2},b_{2}\right]}N^{\tau_{1}}_{i},\min_{\left[0,T\right]\times\left[a_{2},b_{2}\right]}N^{-}_{1},
\min_{\left[0,T\right]\times\left[a_{1},b_{1}\right]}N^{-}_{2},\min_{\left[0,T\right]\times\left[a_{1},b_{1}\right]}N^{+}_{3},\min_{\left[0,T\right]\times\left[a_{2},b_{2}\right]}N^{+}_{4}\left.\right\} \label{mpod-2}
\end{equation}
then $M^{\tau_{1},T}>0$. If \eqref{mpod-2} does not hold, then for
$\tau_{2}$such that 
\begin{equation}
\tau_{2}-\tau_{1}=\frac{1}{8cSR^{2}}\min_{1\leq i\leq4}\left\{ \right.\min_{\left[a_{1},b_{1}\right]\times\left[a_{2},b_{2}\right]}N^{\tau_{1}}_{i},\min_{\left[0,T\right]\times\left[a_{2},b_{2}\right]}N^{-}_{1},
\min_{\left[0,T\right]\times\left[a_{1},b_{1}\right]}N^{-}_{2},\min_{\left[0,T\right]\times\left[a_{1},b_{1}\right]}N^{+}_{3},\min_{\left[0,T\right]\times\left[a_{2},b_{2}\right]}N^{+}_{4}\left.\right\} \label{mpod-2-1}
\end{equation}
we have $M^{\tau_{1},\tau_{2}}>0$.

Suppose by induction that we have $0<\tau_{1}<\cdots<\tau_{n}\leq T$
such that $M^{\tau_{k-1},\tau_{k}}>0$ for $k=2,\cdots,n.$ By similar
arguments as previously, take $N^{\tau_{n}}\equiv M^{\tau_{n-1},\tau_{n}}\left(\tau_{n},\cdot,\cdot\right).$
If 
\begin{equation}
T-\tau_{n}<\frac{1}{4cSR^{2}}\min_{1\leq i\leq4}\left\{ \right.\min_{\left[a_{1},b_{1}\right]\times\left[a_{2},b_{2}\right]}N^{\tau_{n}}_{i},\min_{\left[0,T\right]\times\left[a_{2},b_{2}\right]}N^{-}_{1},
\min_{\left[0,T\right]\times\left[a_{1},b_{1}\right]}N^{-}_{2},\min_{\left[0,T\right]\times\left[a_{1},b_{1}\right]}N^{+}_{3},\min_{\left[0,T\right]\times\left[a_{2},b_{2}\right]}N^{+}_{4}\left.\right\} \label{mpod-2-2}
\end{equation}
then $M^{\tau_{n},T}>0$. If \eqref{mpod-2-2} does not hold, then
for $\tau_{n+1}$ such that 
\begin{equation}
\tau_{n+1}-\tau_{n}=\frac{1}{8cSR^{2}}\min_{1\leq i\leq4}\left\{ \right.\min_{\left[a_{1},b_{1}\right]\times\left[a_{2},b_{2}\right]}N^{\tau_{n}}_{i},\min_{\left[0,T\right]\times\left[a_{2},b_{2}\right]}N^{-}_{1},
\min_{\left[0,T\right]\times\left[a_{1},b_{1}\right]}N^{-}_{2},\min_{\left[0,T\right]\times\left[a_{1},b_{1}\right]}N^{+}_{3},\min_{\left[0,T\right]\times\left[a_{2},b_{2}\right]}N^{+}_{4}\left.\right\} \label{mpod-2-1-1}
\end{equation}
we have $M^{\tau_{n},\tau_{n+1}}>0$.

Now suppose that this procedure of extension of the positiveness breaks
down for an interval $\left[0,T'\right[$ such that $T'<T$. Then
$M^{0,T'}>0$ on $\left[0,T'\right[$ and by continuity of $M,$ $M^{0,T'}>0$
on $\left[0,T'\right].$ Then if 
\begin{equation}
T-T'<\frac{1}{4cSR^{2}}\min_{1\leq i\leq4}\left\{ \right.\min_{\left[a_{1},b_{1}\right]\times\left[a_{2},b_{2}\right]}M^{0,T'}_{i}\left(T',\cdot,\cdot\right),\min_{\left[0,T\right]\times\left[a_{2},b_{2}\right]}N^{-}_{1},
\min_{\left[0,T\right]\times\left[a_{1},b_{1}\right]}N^{-}_{2},\min_{\left[0,T\right]\times\left[a_{1},b_{1}\right]}N^{+}_{3},\min_{\left[0,T\right]\times\left[a_{2},b_{2}\right]}N^{+}_{4}\left.\right\} \label{mpod-2-2-1}
\end{equation}
then we obtain $M^{T',T}>0;$otherwise taking $T''\in\left]T',T\right]$
such that 
\begin{equation}
T''-T'=\frac{1}{8cSR^{2}}\min_{1\leq i\leq4}\left\{ \right.\min_{\left[a_{1},b_{1}\right]\times\left[a_{2},b_{2}\right]}M^{0,T'}_{i}\left(T',\cdot,\cdot\right),\min_{\left[0,T\right]\times\left[a_{2},b_{2}\right]}N^{-}_{1},
\min_{\left[0,T\right]\times\left[a_{1},b_{1}\right]}N^{-}_{2},\min_{\left[0,T\right]\times\left[a_{1},b_{1}\right]}N^{+}_{3},\min_{\left[0,T\right]\times\left[a_{2},b_{2}\right]}N^{+}_{4}\left.\right\} \label{mpod-2-1-1-1}
\end{equation}
yields $M^{T',T''}>0.$ Thus, the extension of positiveness reaches
$\left[0,T\right].$ Hence we have $M^{0,T}>0,$ for all $T$ such
that $M$ is defined on $\left[0,T\right];$ we conclude that $M>0.$
\end{proof}

\section{Existence }\label{sec:Existence}
\begin{rem}
\label{rem:In--we}In \cite{arxiv 4V} we consider the subspace $E$
of $C\left(\begin{array}{r}
\mathscr{P}\end{array};\R\right)$ consisting of functions $u$ that are continuous on $\mathscr{P}$
such that $\dfrac{\partial u}{\partial t},\dfrac{\partial u}{\partial x},\dfrac{\partial u}{\partial y}$
are defined in $\mathring{\mathscr{P}}$ except possibly in the planes
with respective equations $x-c\left(t-\tau\right)=a_{1},y-c\left(t-\tau\right)=a_{2},y+c\left(t-\tau\right)=b_{2}$
and $x+c\left(t-\tau\right)=b_{1}$, and are continuous and bounded.
We know that $\forall M\in E^{4}$ such that $\left(\dfrac{\partial}{\partial t},\dfrac{\partial}{\partial x},\dfrac{\partial}{\partial y}\right)\left|M\right|$
exist, it holds that $\mathcal{T}^{\sigma}\left(M\right)\in E^{4}.$
Particularly this holds for all $M\in E^{4},$ such that $M\geq0.$ 
\end{rem}

Using the Definition \ref{nnjnjn}, we consider for the present
problem, the sets 
\begin{equation}
\mathscr{M}_{R,R'}\equiv\left\{ \right.M\in E^{4}/\left\Vert M\right\Vert <R\text{ and }\max\left\{ \left\Vert \dfrac{\partial M}{\partial t}\right\Vert ,\left\Vert \dfrac{\partial M}{\partial x}\right\Vert ,\left\Vert \dfrac{\partial M}{\partial y}\right\Vert \right\} <R'\left.\right\} \label{oizuue}
\end{equation}
and $\mathscr{M}^{+}_{R,R'}$ which are non-empty convex and relatively
compact in $\left(C\left(\begin{array}{r}
\mathscr{P}\end{array};\R\right)^{4},\left\Vert \cdot\right\Vert \right)$ according to Proposition \ref{prop::odlp-1}. 
\begin{prop}
\label{prop:Let--be}Let $R_{0}>0$ be given and consider $R>0$ such
that $R<R_{0}.$ Consider also $\left[\tau,\tau'\right]$ such that
$\tau'-\tau\leq1.$ Then we have the following estimations: For every
$R'>0,$and for every $M\in\mathscr{M}^{+}_{R,R'},$

\begin{equation}
\left\Vert \mathcal{T}^{\sigma}\left(M\right)\right\Vert <\left(\tau'-\tau\right)\left(4\sigma+4cS\right)R^{2}+q\label{lodo}
\end{equation}
and 
\begin{multline}
\max\left\{ \left\Vert \dfrac{\partial}{\partial t}\mathcal{T}^{\sigma}\left(M\right)\right\Vert ,\left\Vert \dfrac{\partial}{\partial x}\mathcal{T}^{\sigma}\left(M\right)\right\Vert ,\left\Vert \dfrac{\partial}{\partial y}\mathcal{T}^{\sigma}\left(M\right)\right\Vert \right\} <
\left(8+\dfrac{4}{c}\right)\left(\sigma+cS\right)R^{2}_{0}+4\sigma\left(4\sigma+4cS\right)R^{3}_{0}+\left(8+\dfrac{4}{c}\right)\sigma R_{0}q+q'+\\
\left[4\sigma\left(1+c\right)\left(4\sigma+4cS\right)R^{2}_{0}+\left(1+c\right)\left(8\sigma+8cS\right)R_{0}+4\sigma\left(1+c\right)q\right]\left(\tau'-\tau\right)R'.\label{kdid}
\end{multline}
\end{prop}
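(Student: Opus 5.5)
We treat the components $\mathcal{T}^{\sigma}_{1}$ in detail, on its two pieces $\mathcal{T}^{A,\sigma}_{1}$ and $\mathcal{T}^{B,\sigma}_{1}$. The components $i=2,3,4$ follow by the same computation after exchanging the roles of $x$ and $y$ and the signs of the characteristics.

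\textbf{Sup-norm estimate.} Fix $M\in\mathscr{M}^{+}_{R,R'}$, so $M\geq0$ and $|M|=M$. Write $\Phi(s)=\sigma\int_{\tau}^{s}\rho(M)(r,x+c(r-t),y)\,dr$, or the analogous expression starting from the entrance time $t_{0}=t-\frac{x-a_{1}}{c}$ in case $B$. Then
\[
\mathcal{T}^{A,\sigma}_{1}(M)=\int_{\tau}^{t}e^{\Phi(s)-\Phi(t)}Q^{\sigma}_{1}(M)\,ds+\overline{N^{\tau}_{1}}\,e^{-\Phi(t)} .
\]
Since $\Phi$ is non-decreasing, both exponential factors lie in $(0,1]$. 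From $\rho(M)\le4R$ and $|Q(M)|\le4cSR^{2}$ we get $|Q^{\sigma}_{1}(M)|\le(4\sigma+4cS)R^{2}$. The integral runs over an interval contained in $[\tau,\tau']$ (the fact recalled in the proof of Theorem \ref{thm:If-the-data}). Therefore
\[
\left|\mathcal{T}^{A,\sigma}_{1}(M)\right|\le(\tau'-\tau)(4\sigma+4cS)R^{2}+q .
\]
Case $B$ gives the same bound, with $\overline{N^{-}_{1}}$ in place of $\overline{N^{\tau}_{1}}$. For the strict inequality we note that $M$ is continuous on the compact set $\mathscr{P}$, so $\|M\|<R$ is attained as a maximum $R_{1}<R$. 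We run the argument with $R_{1}$ and then use $R_{1}<R$.

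\textbf{Derivative estimates.} We differentiate the explicit formula off the exceptional planes listed in Remark \ref{rem:In--we}. The bound is obtained by splitting the derivative into three types of terms.

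\emph{(i) Terms from the moving integration limits.} Differentiating the lower limit $t_{0}$ produces factors $1$ or $1/c$, times $Q^{\sigma}_{1}$ evaluated on the boundary. The upper limit contributes $Q^{\sigma}_{1}(M)(t,x,y)$ directly. Both are bounded by $(4\sigma+4cS)R^{2}$ times constants of the form $(1+1/c)$. This produces the term $\left(8+\frac{4}{c}\right)(\sigma+cS)R_{0}^{2}$.

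\emph{(ii) Terms from differentiating $\Phi$.} The exponent contributes, through the moving limits, terms like $\sigma\rho(M)$, which is at most $4\sigma R$. These multiply either the integral, giving $4\sigma(4\sigma+4cS)R_{0}^{3}$, or the data term, giving $\left(8+\frac{4}{c}\right)\sigma R_{0}q$. The exponent also contributes integrals of $\sigma\,\partial\rho(M)$, bounded by $4\sigma(1+c)(\tau'-\tau)R'$ since the chain rule along the characteristic brings a factor $(1+c)$.

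\emph{(iii) Terms from differentiating under the integral.} Differentiating $Q^{\sigma}_{1}(M)$ under the integral gives $|\partial Q^{\sigma}_{1}(M)|\le(8\sigma+8cS)R\,(1+c)R'$ after the chain rule. The data derivative gives $q'$.

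Collecting everything, and using $\tau'-\tau\le1$ to absorb the mixed products, all terms without the factor $R'$ form the constant part of \eqref{kdid}. The terms containing $R'$ each carry a factor $(\tau'-\tau)$, with coefficients
\[
4\sigma(1+c)(4\sigma+4cS)R_{0}^{2},\qquad (1+c)(8\sigma+8cS)R_{0},\qquad 4\sigma(1+c)q,
\]
coming respectively from $\partial\Phi$ against the integral, from $\partial Q^{\sigma}$, and from $\partial\Phi$ against the data.

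\textbf{Main obstacle.} The hard step is the bookkeeping in (ii)--(iii). The integrand contains the nested exponential $e^{\Phi(s)-\Phi(t)}$, whose derivative is itself an integral of $\partial\rho(M)$. This produces a double integral bounded by $(\tau'-\tau)^{2}$, which we reduce to $(\tau'-\tau)$ using $\tau'-\tau\le1$. We must also check that the chain-rule factor along a characteristic direction is $(1+c)$ for $\partial_{t}$ and at most $1$ for $\partial_{x},\partial_{y}$, so that the uniform factor $(1+c)$ covers all three derivatives.
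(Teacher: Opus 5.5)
Your proposal is correct and follows essentially the paper's route: rewrite each piece with the single factor $e^{-\sigma\int_s^t\rho(|M|)}$, bound the sup norm using exponentials in $(0,1]$ and integration intervals inside $[\tau,\tau']$, then differentiate term by term and absorb $(\tau'-\tau)^2$ into $(\tau'-\tau)$ via $\tau'-\tau\le 1$. One bookkeeping correction: the chain-rule factor along the characteristic is $c$ for $\partial_t$ and $1$ for $\partial_x,\partial_y$, and the moving limits in case $B$ give a coefficient $2$ for $\partial_t$ and $\frac{1}{c}$ for $\partial_x$, so your ``$(1+\frac{1}{c})$'' should be $(2+\frac{1}{c})$; the coefficients $(1+c)$ and $(8+\frac{4}{c})$ in the stated bound come, as in the paper, from adding the separate $\partial_t$ and $\partial_x$ bounds rather than from a single derivative.
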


\begin{proof}
It follows from \eqref{eq:kiid} and \eqref{eq:kiid} that 
\begin{equation}
\left\Vert Q^{\sigma}_{i}\left(M\right)\right\Vert _{\infty}\leq\left(4\sigma+4cS\right)\left\Vert M\right\Vert ^{2},\label{kdiod}
\end{equation}
\begin{equation}
\left\Vert \left(\dfrac{\partial}{\partial t},\dfrac{\partial}{\partial x},\dfrac{\partial}{\partial y}\right)Q^{\sigma}_{i}\left(M\right)\right\Vert \leq\left(8\sigma+8cS\right)\left\Vert M\right\Vert \cdot\underbrace{\max\left\{ \left\Vert \dfrac{\partial M}{\partial t}\right\Vert ,\left\Vert \dfrac{\partial M}{\partial x}\right\Vert ,\left\Vert \dfrac{\partial M}{\partial y}\right\Vert \right\} }_{=\max_{1\leq i\leq4}\left\Vert \nabla_{t,x,y}M_{i}\right\Vert }
\end{equation}
\begin{equation}
\left\Vert \rho\left(M\right)\right\Vert _{\infty}\leq4\left\Vert M\right\Vert \label{ksidi}
\end{equation}
 and 
\begin{equation}
\left\Vert \left(\dfrac{\partial}{\partial t},\dfrac{\partial}{\partial x},\dfrac{\partial}{\partial y}\right)\rho\left(M\right)\right\Vert \leq4\left\Vert M\right\Vert .\label{idood}
\end{equation}

Rewrite \eqref{eq:olole-1} as 
\begin{multline}
\mathcal{T}^{A,\sigma}_{1}\left(M\right)\left(t,x,y\right)=
{\displaystyle \int^{t}_{\tau}}e^{-\sigma\int^{t}_{s}\rho\left(\left|M\right|\right)\left(r,x+c\left(r-t\right),y\right)dr}\cdot Q^{\sigma}_{1}\left(\left|M\right|\right)\left(s,x+c\left(s-t\right),y\right)ds+\\
\overline{N^{\tau}_{1}}\left(t,x,y\right)\cdot e^{-\sigma{\textstyle \int^{t}_{\tau}}\rho\left(\left|M\right|\right)\left(s,x+c\left(s-t\right),y\right)ds}.\label{eq:olole-1-4}
\end{multline}
Recall that the lower bound in every integral is less or equal to
the upper bound in the formulas \eqref{aoalal}-\eqref{eq:olole-1-1-1-2}.
Using \eqref{kdiod}, \eqref{eq:olole-1-4} yields 

\begin{align}
\left\Vert \mathcal{T}^{A,\sigma}_{1}\left(M\right)\right\Vert _{\infty} & <\left(\tau'-\tau\right)\left(4\sigma+4cS\right)R^{2}+q.\label{eq:ksoloa-2}
\end{align}
Compute $\dfrac{\partial}{\partial t}\mathcal{T}^{A,\sigma}_{1}\left(M\right)$
from \eqref{eq:olole-1-4} and estimate $\left\Vert \dfrac{\partial}{\partial t}\mathcal{T}^{A,\sigma}_{1}\left(M\right)\right\Vert _{\infty}$
using \eqref{kdiod}-\eqref{idood}:
\begin{multline}
\left\Vert \dfrac{\partial}{\partial t}\mathcal{T}^{A,\sigma}_{1}\left(M\right)\right\Vert _{\infty}\leq\left(4\sigma+4cS\right)\left\Vert M\right\Vert ^{2}+
\left(\tau'-\tau\right)\left\{ \right.\sigma\left(\right.4\left\Vert M\right\Vert +\left(\tau'-\tau\right)c\cdot4\max_{1\leq i\leq4}\left\Vert \nabla_{t,x,y}M_{i}\right\Vert \left.\right)\cdot
\left(4\sigma+4cS\right)\left\Vert M\right\Vert ^{2}+\\c\left(8\sigma+8cS\right)\left\Vert M\right\Vert \max_{1\leq i\leq4}\left\Vert \nabla_{t,x,y}M_{i}\right\Vert \left.\right\} +
\sigma\left(\right.4\left\Vert M\right\Vert +\left(\tau'-\tau\right)c\cdot4\max_{1\leq i\leq4}\left\Vert \nabla_{t,x,y}M_{i}\right\Vert \left.\right)\cdot q+q'.\label{eq:oookkz-2-5}
\end{multline}
Use $\left\Vert M\right\Vert <R<R_{0},$ $\max_{1\leq i\leq4}\left\Vert \nabla_{t,x,y}M_{i}\right\Vert <R'$
and $\tau'-\tau\leq1;$ it follows 
\begin{multline}
\left\Vert \dfrac{\partial}{\partial t}\mathcal{T}^{A,\sigma}_{1}\left(M\right)\right\Vert _{\infty}<
\left(4\sigma+4cS\right)R^{2}_{0}+4\sigma\left(4\sigma+4cS\right)R^{3}_{0}+4\sigma R_{0}q+q'+\\
\left[4\sigma c\left(4\sigma+4cS\right)R^{2}_{0}+c\left(8\sigma+8cS\right)R_{0}+4\sigma cq\right]\left(\tau'-\tau\right)R'.\label{eq:oookkz-2-5-1}
\end{multline}
Similar calculations and estimations for $\left(\dfrac{\partial}{\partial x},\dfrac{\partial}{\partial y}\right)\mathcal{T}^{A,\sigma}_{1}\left(M\right)$
yield 
\begin{equation}
\left\Vert \left(\dfrac{\partial}{\partial x},\dfrac{\partial}{\partial y}\right)\mathcal{T}^{A,\sigma}_{1}\left(M\right)\right\Vert <
q'+\left[4\sigma\left(4\sigma+4cS\right)R^{2}_{0}+\left(8\sigma+8cS\right)R_{0}+4\sigma q\right]\left(\tau'-\tau\right)R'.\label{eq:oookkz-2-5-1-1}
\end{equation}
Following the similar method for \ref{eq:olole-1-1} it follows 
\begin{equation}
\left\Vert \mathcal{T}^{B,\sigma}_{1}\left(M\right)\right\Vert _{\infty}<\left(\tau'-\tau\right)\left(4\sigma+4cS\right)R^{2}+q\label{eq:omplo-1}
\end{equation}
\begin{multline}
\left\Vert \dfrac{\partial}{\partial t}\mathcal{T}^{B,\sigma}_{1}\left(M\right)\right\Vert _{\infty}<\left(8\sigma+8cS\right)R^{2}_{0}+4\sigma\left(4\sigma+4cS\right)R^{3}_{0}+8\sigma R_{0}q+q'+\\
\left[4\sigma c\left(4\sigma+4cS\right)R^{2}_{0}+c\left(8\sigma+8cS\right)R_{0}+4\sigma cq\right]\left(\tau'-\tau\right)R'\label{eq:oookkz-2-3-1}
\end{multline}
\begin{equation}
\left\Vert \dfrac{\partial}{\partial x}\mathcal{T}^{B,\sigma}_{1}\left(M\right)\right\Vert _{\infty}<\dfrac{1}{c}\left(4\sigma+4cS\right)R^{2}_{0}+\dfrac{4}{c}\sigma R_{0}q+q'+
\left[4\sigma\left(4\sigma+4cS\right)R^{2}_{0}+\left(8\sigma+8cS\right)R_{0}+4\sigma q\right]\left(\tau'-\tau\right)R'\label{eq:oookkz-2-1-2-1}
\end{equation}
\begin{equation}
\left\Vert \dfrac{\partial}{\partial y}\mathcal{T}^{B,\sigma}_{1}\left(M\right)\right\Vert _{\infty}<
q'+\left[4\sigma\left(4\sigma+4cS\right)R^{2}_{0}+\left(8\sigma+8cS\right)R_{0}+4\sigma q\right]\left(\tau'-\tau\right)R'\label{eq:oookkz-2-1-1-1-1}
\end{equation}
Now it follows from \eqref{eq:ksoloa-2}, \eqref{eq:oookkz-2-5-1},
\eqref{eq:oookkz-2-5-1-1}, \eqref{eq:oookkz-2-3-1}, \eqref{eq:oookkz-2-1-2-1}
and \eqref{eq:oookkz-2-1-1-1-1} that 
\begin{align}
\left\Vert \mathcal{T}^{\sigma}_{1}\left(M\right)\right\Vert _{\infty} & <\left(\tau'-\tau\right)\left(4\sigma+4cS\right)R^{2}+q\label{eq:ksoloa-2-1}
\end{align}
and 
\begin{multline}
\left\Vert \nabla_{t,x,y}\mathcal{T}^{\sigma}_{1}\left(M\right)\right\Vert <
\left(8+\dfrac{4}{c}\right)\left(\sigma+cS\right)R^{2}_{0}+4\sigma\left(4\sigma+4cS\right)R^{3}_{0}+\left(8+\dfrac{4}{c}\right)\sigma R_{0}q+q'+\\
\left[4\sigma\left(1+c\right)\left(4\sigma+4cS\right)R^{2}_{0}+\left(1+c\right)\left(8\sigma+8cS\right)R_{0}+4\sigma\left(1+c\right)q\right]\left(\tau'-\tau\right)R'.\label{jddud}
\end{multline}
The formulas \eqref{aoalal-2}-\eqref{eq:olole-1-1-1-2-2} are similar,
and the previous method of estimation yields the same bound for $\left\Vert \mathcal{T}^{\sigma}_{i}\left(M\right)\right\Vert _{\infty}$
and $\left\Vert \nabla_{t,x,y}\mathcal{T}^{\sigma}_{i}\left(M\right)\right\Vert $
as in \eqref{eq:ksoloa-2-1} and \eqref{jddud} when $i=2,3,4$. It
follows that 
\begin{align}
\left\Vert \mathcal{T}^{\sigma}\left(M\right)\right\Vert  & <\left(\tau'-\tau\right)\left(4\sigma+4cS\right)R^{2}+q\label{eq:ksoloa-2-1-1}
\end{align}
and 
\begin{multline}
\max_{1\leq i\leq4}\left\Vert \nabla_{t,x,y}\mathcal{T}^{\sigma}_{i}\left(M\right)\right\Vert <
\left(8+\dfrac{4}{c}\right)\left(\sigma+cS\right)R^{2}_{0}+4\sigma\left(4\sigma+4cS\right)R^{3}_{0}+\left(8+\dfrac{4}{c}\right)\sigma R_{0}q+q'+\\
\left[4\sigma\left(1+c\right)\left(4\sigma+4cS\right)R^{2}_{0}+\left(1+c\right)\left(8\sigma+8cS\right)R_{0}+4\sigma\left(1+c\right)q\right]\left(\tau'-\tau\right)R'.\label{jddud-1}
\end{multline}
\end{proof}

\begin{proof}[\textbf{ Proof of theorem \ref{thm:dhgrt}}]
As mentioned in the Preliminaries, the uniqueness of solutions to $\Sigma_{\tau,\tau'}$
and the continuity of the operator $\mathcal{T}^{\sigma}$ are already
established in \cite{arxiv 4V}. \\
$\tau'-\tau\leq\min\left\{ 1,f\left(q\right),g\left(q\right)\right\} $
implies that $\tau'-\tau\leq\dfrac{1}{16\left(\sigma+cS\right)q}.$
\\
Take $R=\dfrac{1-\sqrt{1-16\left(\tau'-\tau\right)\left(\sigma+cS\right)q}}{\left(8\sigma+8cS\right)\left(\tau'-\tau\right)}$.
It follows that 
\begin{equation}
R<R_{0}\iff1-\left(8\sigma+8cS\right)\left(\tau'-\tau\right)R_{0}<\sqrt{1-16\left(\tau'-\tau\right)\left(\sigma+cS\right)q}.\label{loco}
\end{equation}
From \eqref{ldod} $\tau'-\tau\leq f\left(q\right)$ implies that
$\tau'-\tau<\dfrac{1}{\left(8\sigma+8cS\right)R_{0}},$ that is\\ $1-\left(8\sigma+8cS\right)\left(\tau'-\tau\right)R_{0}>0.$
Hence \eqref{loco} rewrites successively 
\begin{align}
R<R_{0}\iff & 16\left(\tau'-\tau\right)\left(\sigma+cS\right)\left(q-R_{0}\right)+\left(8\sigma+8cS\right)^{2}\left(\tau'-\tau\right)^{2}R^{2}_{0}<0\label{loco-1}\\
\iff & q-R_{0}+\left(4\sigma+4cS\right)\left(\tau'-\tau\right)R^{2}_{0}<0.
\end{align}
As $q>R_{0}$ it holds that 
\begin{equation}
R<R_{0}\iff\tau'-\tau<\dfrac{R_{0}-q}{\left(4\sigma+4cS\right)R^{2}_{0}}.
\end{equation}
From the hypotheses and \eqref{loodie}, $\tau'-\tau\leq g\left(q\right)<\dfrac{R_{0}-q}{\left(4\sigma+4cS\right)R^{2}_{0}}.$
Hence it holds that $R<R_{0}.$ The hypotheses also imply that $\tau'-\tau\leq1.$
Therefore $R$ and $\tau,\tau'$ satisfy the hypotheses in the Proposition
\ref{prop:Let--be}. Consequently \eqref{lodo} and \eqref{kdid}
hold. Therefore for every $R'>0,$ $\mathcal{T}^{\sigma}\left(\mathscr{M}_{R,R'}\right)$
is a subset of a certain $\mathscr{M}_{R_{1},R_{1}'}$ where from
$\mathcal{T}^{\sigma}$ is compact on $\mathscr{M}_{R,R'}.$ \\
Now $R=\dfrac{1-\sqrt{1-16\left(\tau'-\tau\right)\left(\sigma+cS\right)q}}{\left(8\sigma+8cS\right)\left(\tau'-\tau\right)}$
implies that $R$ satisfies the inequality\\ $\left(\tau'-\tau\right)\left(4\sigma+4cS\right)R^{2}+q\leq R.$
From \eqref{lodo}, it follows that 
\begin{equation}
M\in\mathscr{M}^{+}_{R,R'}\implies\left\Vert \mathcal{T}^{\sigma}\left(M\right)\right\Vert <R.\label{lsoiki}
\end{equation}
On another hand, 
\begin{multline}
\left(8+\dfrac{4}{c}\right)\left(\sigma+cS\right)R^{2}_{0}+4\sigma\left(4\sigma+4cS\right)R^{3}_{0}+\left(8+\dfrac{4}{c}\right)\sigma R_{0}q+q'+\\
\left[4\sigma\left(1+c\right)\left(4\sigma+4cS\right)R^{2}_{0}+\left(1+c\right)\left(8\sigma+8cS\right)R_{0}+4\sigma\left(1+c\right)q\right]\left(\tau'-\tau\right)R'\leq R'\label{idido}
\end{multline}
is equivalent to 
\begin{multline}
R'\left\{ 1-\left[4\sigma\left(1+c\right)\left(4\sigma+4cS\right)R^{2}_{0}+\left(1+c\right)\left(8\sigma+8cS\right)R_{0}+4\sigma\left(1+c\right)q\right]\left(\tau'-\tau\right)\right\} \geq\\
\left(8+\dfrac{4}{c}\right)\left(\sigma+cS\right)R^{2}_{0}+4\sigma\left(4\sigma+4cS\right)R^{3}_{0}+\left(8+\dfrac{4}{c}\right)\sigma R_{0}q+q'.\label{oloo}
\end{multline}
From \eqref{ldod} $\tau'-\tau\leq f\left(q\right)$ implies that
\begin{equation}
\tau'-\tau<\dfrac{1}{4\sigma\left(1+c\right)\left(4\sigma+4cS\right)R^{2}_{0}+\left(1+c\right)\left(8\sigma+8cS\right)R_{0}+4\sigma\left(1+c\right)q}.
\end{equation}
Therefore \eqref{idido} is equivalent to 
\begin{equation}
R'\geq\dfrac{\left(8+\dfrac{4}{c}\right)\left(\sigma+cS\right)R^{2}_{0}+4\sigma\left(4\sigma+4cS\right)R^{3}_{0}+\left(8+\dfrac{4}{c}\right)\sigma R_{0}q+q'}{1-\left(\tau'-\tau\right)\left[4\sigma\left(1+c\right)\left(4\sigma+4cS\right)R^{2}_{0}+\left(1+c\right)\left(8\sigma+8cS\right)R_{0}+4\sigma\left(1+c\right)q\right]}.\label{oleiu}
\end{equation}
Take 
\begin{equation}
R'=\dfrac{\left(8+\dfrac{4}{c}\right)\left(\sigma+cS\right)R^{2}_{0}+4\sigma\left(4\sigma+4cS\right)R^{3}_{0}+\left(8+\dfrac{4}{c}\right)\sigma R_{0}q+q'}{1-\left(\tau'-\tau\right)\left[4\sigma\left(1+c\right)\left(4\sigma+4cS\right)R^{2}_{0}+\left(1+c\right)\left(8\sigma+8cS\right)R_{0}+4\sigma\left(1+c\right)q\right]}.\label{oddd}
\end{equation}
 \eqref{idido} is satisfied. It follows from \eqref{kdid} that 
\begin{equation}
M\in\mathscr{M}^{+}_{R,R'}\implies\max\left\{ \left\Vert \dfrac{\partial}{\partial t}\mathcal{T}^{\sigma}\left(M\right)\right\Vert ,\left\Vert \dfrac{\partial}{\partial x}\mathcal{T}^{\sigma}\left(M\right)\right\Vert ,\left\Vert \dfrac{\partial}{\partial y}\mathcal{T}^{\sigma}\left(M\right)\right\Vert \right\} <R'.\label{jduyh}
\end{equation}
From Remark \ref{rem:In--we}, $M\in\mathscr{M}^{+}_{R,R'}\implies\mathcal{T}^{\sigma}\left(M\right)\in E^{4}.$
Therefore \eqref{lsoiki} and \eqref{jduyh} imply that $\mathcal{T}^{\sigma}\left(\mathscr{M}^{+}_{R,R'}\right)\subset\mathscr{M}^{+}_{R,R'}$
for $\sigma$ sufficiently large as $\mathcal{T}^{\sigma}$ is non-negative.
\\
Finally, as $\mathscr{M}^{+}_{R,R'}$ is a non-empty convex subset
of $C\left(\begin{array}{r}
\mathscr{P}\end{array};\R\right)^{4},$we deduce from the Schauder fixed point theorem that $\mathcal{T}^{\sigma}$
possesses a fixed point $N\in\mathscr{M}^{+}_{R,R'};$from the Proposition
\eqref{prop:pqmm} $N$ is a non-negative solution to $\Sigma_{\tau,\tau'}.$
Moreover, $N\in\mathscr{M}^{+}_{R,R'}\implies$
\begin{equation}
\left\Vert N\right\Vert <R<R_{0}\label{iido-1}
\end{equation}
and 
\begin{equation}
\max\left\{ \left\Vert \dfrac{\partial N}{\partial t}\right\Vert ,\left\Vert \dfrac{\partial N}{\partial x}\right\Vert ,\left\Vert \dfrac{\partial N}{\partial y}\right\Vert \right\} <R'.\label{ossisi}
\end{equation}
\end{proof}

\begin{proof}[\textbf{Proof of theorem \ref{thm:The-problem-}}]
Put $\tau_{0}=0,$ 
\begin{equation}
q_{0}\equiv{\displaystyle \max_{1\leq i\leq4}}\Biggl\{\left\Vert \overline{N^{0}_{i}}\right\Vert _{\infty},\left\Vert \overline{N^{-}_{1}}\right\Vert _{\infty},\left\Vert \overline{N^{-}_{2}}\right\Vert _{\infty},\left\Vert \overline{N^{+}_{3}}\right\Vert _{\infty},\left\Vert \overline{N^{+}_{4}}\right\Vert _{\infty}\Biggr\}\label{sskiu}
\end{equation}
 and 
\begin{equation}
q'_{0}\equiv\max_{1\leq i\leq4}\left\{ \left\Vert \nabla_{t,x,y}\overline{N^{0}_{i}}\right\Vert ,\left\Vert \nabla_{t,x,y}\overline{N^{-}_{1}}\right\Vert ,\left\Vert \nabla_{t,x,y}\overline{N^{-}_{2}}\right\Vert ,\left\Vert \nabla_{t,x,y}\overline{N^{+}_{3}}\right\Vert ,\left\Vert \nabla_{t,x,y}\overline{N^{+}_{4}}\right\Vert \right\} .\label{oloo-1-1}
\end{equation}
and consider 
\begin{equation}
R_{0}>q_{0}.\label{sssqq}
\end{equation}
Put $\tau_{1}-\tau_{0}\equiv S_{0}=\min\left\{ 1,f\left(q_{0}\right),g\left(q_{0}\right)\right\} .$
Then according to the Theorem \ref{thm:dhgrt}, the problem $\Sigma_{\tau_{0},\tau_{1}}$
admits an unique solution $N^{\tau_{0},\tau_{1}}\geq0$ defined on
$\left[\tau_{0},\tau_{1}\right]\times\left[a_{1},b_{1}\right]\times\left[a_{2},b_{2}\right]$
and $N^{\tau_{0},\tau_{1}}$ satisfies 
\begin{equation}
\left\Vert N^{\tau_{0},\tau_{1}}\right\Vert <R_{0}\label{iido-2}
\end{equation}
and 
\begin{equation}
\max\left\{ \left\Vert \dfrac{\partial N^{\tau_{0},\tau_{1}}}{\partial t}\right\Vert ,\left\Vert \dfrac{\partial N^{\tau_{0},\tau_{1}}}{\partial x}\right\Vert ,\left\Vert \dfrac{\partial N^{\tau_{0},\tau_{1}}}{\partial y}\right\Vert \right\} <\dfrac{V\left(R_{0}\right)+\left(8+\frac{4}{c}\right)\sigma R_{0}q_{0}+q'_{0}}{1-S_{0}\left[W\left(R_{0}\right)+4\sigma\left(1+c\right)q_{0}\right]}\label{oizu-1}
\end{equation}
where 
\begin{equation}
V\left(R_{0}\right)\equiv\left(8+\frac{4}{c}\right)\left(\sigma+cS\right)R^{2}_{0}+4\sigma\left(4\sigma+4cS\right)R^{3}_{0}\label{jduud}
\end{equation}
and 
\begin{equation}
W\left(R_{0}\right)\equiv4\sigma\left(1+c\right)\left(4\sigma+4cS\right)R^{2}_{0}+\left(1+c\right)\left(8\sigma+8cS\right)R_{0}.\label{osisisu}
\end{equation}
 Consider the problem $\Sigma_{\tau_{1},\tau_{2}}$ where the initial
data $N^{\tau_{1}}_{i}$ are defined by 
\begin{equation}
N^{\tau_{1}}_{i}\left(\cdot,\cdot\right)\equiv N^{\tau_{0},\tau_{1}}_{i}\left(\tau_{1},\cdot,\cdot\right).\label{ldood}
\end{equation}
 As $N^{\tau_{0},\tau_{1}}$ is solution to $\Sigma_{\tau_{0},\tau_{1}},$
the boundary conditions \eqref{eq:losso-1}-\eqref{eq:ikioi-1-1}
for $\Sigma_{\tau_{0},\tau_{1}}$ when $t=\tau_{1},$ writes 
\begin{align*}
N^{\tau_{0},\tau_{1}}_{1}\left(\tau_{1},a_{1},y\right)= & N^{-}_{1}\left(\tau_{1},y\right)\\
N^{\tau_{0},\tau_{1}}_{2}\left(\tau_{1},x,a_{2}\right)= & N^{-}_{2}\left(\tau_{1},x\right)\\
N^{\tau_{0},\tau_{1}}_{3}\left(\tau_{1},x,b_{2}\right)= & N^{+}_{3}\left(\tau_{1},x\right)\\
N^{\tau_{0},\tau_{1}}_{4}\left(\tau_{1},b_{1},y\right)= & N^{+}_{4}\left(\tau_{1},y\right)
\end{align*}
hence the compatibility conditions \eqref{eq:kqiiq-1}-\eqref{eq:looqp-1-1}
for $\Sigma_{\tau_{1},\tau_{2}}$ are satisfied. Put
\begin{equation}
q_{1}\equiv{\displaystyle \max_{1\leq i\leq4}}\Biggl\{\left\Vert \overline{N^{\tau_{1}}_{i}}\right\Vert _{\infty},\left\Vert \overline{N^{-}_{1}}\right\Vert _{\infty},\left\Vert \overline{N^{-}_{2}}\right\Vert _{\infty},\left\Vert \overline{N^{+}_{3}}\right\Vert _{\infty},\left\Vert \overline{N^{+}_{4}}\right\Vert _{\infty}\Biggr\}\label{lodidu}
\end{equation}
and 
\begin{equation}
q'_{1}\equiv\max_{1\leq i\leq4}\left\{ \left\Vert \nabla_{t,x,y}\overline{N^{\tau_{1}}_{i}}\right\Vert ,\left\Vert \nabla_{t,x,y}\overline{N^{-}_{1}}\right\Vert ,\left\Vert \nabla_{t,x,y}\overline{N^{-}_{2}}\right\Vert ,\left\Vert \nabla_{t,x,y}\overline{N^{+}_{3}}\right\Vert ,\left\Vert \nabla_{t,x,y}\overline{N^{+}_{4}}\right\Vert \right\} .\label{oloo-1-1-1}
\end{equation}
It holds with $\gamma\equiv1+c+\dfrac{1}{c}$ that 
\begin{equation}
\left\Vert \nabla_{t,x,y}\overline{N^{\tau_{1}}_{i}}\right\Vert \leq\gamma\left\Vert \nabla_{t,x,y}N^{\tau_{1}}_{i}\right\Vert \label{oeutyy}
\end{equation}
and hence from \eqref{oizu-1} that 
\begin{equation}
q'_{1}\leq\gamma\dfrac{V\left(R_{0}\right)+\left(8+\frac{4}{c}\right)\sigma R_{0}q_{0}+q'_{0}}{1-S_{0}\left[W\left(R_{0}\right)+4\sigma\left(1+c\right)q_{0}\right]}.
\end{equation}
From \eqref{eq:kqiiq-1-1}-\eqref{eq:looqp-1-1-1}, \eqref{ldood},
\eqref{iido-2} it follows that 
\[
\left\Vert \overline{N^{\tau_{1}}_{i}}\right\Vert _{\infty}\leq\left\Vert N^{\tau_{1}}_{i}\right\Vert _{\infty}<R_{0}.
\]
As $R_{0}>q_{0},$ it holds (from \eqref{sskiu}) that 
\[
R_{0}>{\displaystyle \max_{1\leq i\leq4}}\Biggl\{\left\Vert \overline{N^{-}_{1}}\right\Vert _{\infty},\left\Vert \overline{N^{-}_{2}}\right\Vert _{\infty},\left\Vert \overline{N^{+}_{3}}\right\Vert _{\infty},\left\Vert \overline{N^{+}_{4}}\right\Vert _{\infty}\Biggr\}
\]
yielding $R_{0}>q_{1}.$\\
Put $\tau_{2}-\tau_{1}\equiv S_{1}=\min\left\{ 1,f\left(q_{1}\right),g\left(q_{1}\right)\right\} .$
Then the problem $\Sigma_{\tau_{1},\tau_{2}}$ admits an unique solution
$N^{\tau_{1},\tau_{2}}\geq0$ defined on $\left[\tau_{1},\tau_{2}\right]\times\left[a_{1},b_{1}\right]\times\left[a_{2},b_{2}\right]$.
The solution $N^{\tau_{1},\tau_{2}}$ satisfies 
\begin{equation}
\left\Vert N^{\tau_{1},\tau_{2}}\right\Vert <R_{0}\label{iido-2-1}
\end{equation}
and 
\begin{equation}
\max\left\{ \left\Vert \dfrac{\partial N^{\tau_{0},\tau_{1}}}{\partial t}\right\Vert ,\left\Vert \dfrac{\partial N^{\tau_{0},\tau_{1}}}{\partial x}\right\Vert ,\left\Vert \dfrac{\partial N^{\tau_{0},\tau_{1}}}{\partial y}\right\Vert \right\} <\dfrac{V\left(R_{0}\right)+\left(8+\dfrac{4}{c}\right)\sigma R_{0}q_{1}+q'_{1}}{1-S_{1}\left[W\left(R_{0}\right)+4\sigma\left(1+c\right)q_{1}\right]}.\label{oizu-1-1}
\end{equation}
 Assume by induction that there are $\tau_{0},\tau_{1},\cdots,\tau_{n}>0$
such that $\forall\nu=0,1,\cdots,n-1,$

{*} the problem $\Sigma_{\tau_{\nu},\tau_{\nu+1}}$ posed on $\left[\tau_{\nu},\tau_{\nu+1}\right]\times\left[a_{1},b_{1}\right]\times\left[a_{2},b_{2}\right]$where
$N^{\tau_{\nu}}_{i}\left(\cdot,\cdot\right)\equiv N^{\tau_{\nu-1},\tau_{\nu}}_{i}\left(\tau_{\nu},\cdot,\cdot\right)$
are the initial data, admits an unique solution $N^{\tau_{\nu},\tau_{\nu+1}}\geq0$ 

{*} $\tau_{\nu+1}-\tau_{\nu}\equiv S_{\nu}=\min\left\{ 1,f\left(q_{\nu}\right),g\left(q_{\nu}\right)\right\} $
with 
\begin{equation}
q_{\nu}\equiv{\displaystyle \max_{1\leq i\leq4}}\Biggl\{\left\Vert \overline{N^{\tau_{\nu}}_{i}}\right\Vert _{\infty},\left\Vert \overline{N^{-}_{1}}\right\Vert _{\infty},\left\Vert \overline{N^{-}_{2}}\right\Vert _{\infty},\left\Vert \overline{N^{+}_{3}}\right\Vert _{\infty},\left\Vert \overline{N^{+}_{4}}\right\Vert _{\infty}\Biggr\}\label{lodidu-1}
\end{equation}

{*} and that $N^{\tau_{\nu},\tau_{\nu+1}}$ satisfies 
\begin{equation}
\left\Vert N^{\tau_{\nu},\tau_{\nu+1}}\right\Vert <R_{0}\label{lsso}
\end{equation}
 and 
\begin{equation}
\max\left\{ \left\Vert \dfrac{\partial N^{\tau_{\nu},\tau_{\nu+1}}}{\partial t}\right\Vert ,\left\Vert \dfrac{\partial N^{\tau_{\nu},\tau_{\nu+1}}}{\partial x}\right\Vert ,\left\Vert \dfrac{\partial N^{\tau_{\nu},\tau_{\nu+1}}}{\partial y}\right\Vert \right\} <\dfrac{V\left(R_{0}\right)+\left(8+\frac{4}{c}\right)\sigma R_{0}q_{\nu}+q'_{\nu}}{1-S_{\nu}\left[W\left(R_{0}\right)+4\sigma\left(1+c\right)q_{\nu}\right]}.\label{ldodo}
\end{equation}
Put $N^{\tau_{n}}_{i}\left(\cdot,\cdot\right)\equiv N^{\tau_{n-1},\tau_{n}}_{i}\left(\tau_{n},\cdot,\cdot\right)$
and $q_{n}={\displaystyle \max_{1\leq i\leq4}}\Biggl\{\left\Vert \overline{N^{\tau_{n}}_{i}}\right\Vert _{\infty},\left\Vert \overline{N^{-}_{1}}\right\Vert _{\infty},\left\Vert \overline{N^{-}_{2}}\right\Vert _{\infty},\left\Vert \overline{N^{+}_{3}}\right\Vert _{\infty},\left\Vert \overline{N^{+}_{4}}\right\Vert _{\infty}\Biggr\}.$
The compatibility conditions \eqref{eq:kqiiq-1}-\eqref{eq:looqp-1-1}
for $\Sigma_{\tau_{n},\tau_{n+1}}$ are satisfied as the boundary
conditions \eqref{eq:losso-1}-\eqref{eq:ikioi-1-1} for $\Sigma_{\tau_{n-1},\tau_{n}}$
when $t=\tau_{n}.$ From \eqref{lsso}, it follows that $q_{n+1}>R_{0}.$
Thus taking $\tau_{n+1}-\tau_{n}\equiv S_{n}=\min\left\{ 1,f\left(q_{n}\right),g\left(q_{n}\right)\right\} $
and applying Theorem \eqref{thm:dhgrt} yield the existence of an
unique solution $N^{\tau_{n},\tau_{n+1}}\geq0$ on $\left[S_{n},S_{n+1}\right]\times\left[a_{1},b_{1}\right]\times\left[a_{2},b_{2}\right]$
to the problem $\Sigma_{\tau_{n},\tau_{n+1}},$ and $N^{\tau_{n},\tau_{n+1}}$
satisfies 
\begin{equation}
\left\Vert N^{\tau_{n},\tau_{n+1}}\right\Vert <R_{0}\label{lsso-1}
\end{equation}
 and 
\begin{equation}
\max\left\{ \left\Vert \dfrac{\partial N^{\tau_{n},\tau_{n+1}}}{\partial t}\right\Vert ,\left\Vert \dfrac{\partial N^{\tau_{n},\tau_{n+1}}}{\partial x}\right\Vert ,\left\Vert \dfrac{\partial N^{\tau_{n},\tau_{n+1}}}{\partial y}\right\Vert \right\} <\dfrac{V\left(R_{0}\right)+\left(8+\frac{4}{c}\right)\sigma R_{0}q_{n}+q'_{n}}{1-S_{n}\left[W\left(R_{0}\right)+4\sigma\left(1+c\right)q_{n}\right]}.\label{ldodo-1}
\end{equation}
Therefore a sequence $\left(N^{\tau_{n},\tau_{n+1}}\right)_{n\geq0}$
of non-negative continuous uniformly bounded solutions to the system
\eqref{eq:koiqmn} is constructed. For every $n,$ $N^{\tau_{n},\tau_{n+1}}$
is defined on $\left[\tau_{n},\tau_{n+1}\right]\times\left[a_{1},b_{1}\right]\times\left[a_{2},b_{2}\right]$
and satisfies the boundary conditions \eqref{eq:losso}-\eqref{eq:ikioi-1};
$N^{\tau_{0},\tau_{1}}$ satisfies the initial conditions \eqref{eq:lsos}
whilst for $n\geq1,$ $N^{\tau_{n},\tau_{n+1}}$ satisfies the initial
conditions $N^{\tau_{n},\tau_{n+1}}\left(\tau_{n},x,y\right)=N^{\tau_{n-1},\tau_{n}}\left(\tau_{n},x,y\right),$
$\forall\left(x,y\right)\in\left[a_{1},b_{1}\right]\times\left[a_{2},b_{2}\right].$
\\
By construction for every $n\geq0,$ 
\begin{equation}
\tau_{n+1}-\tau_{n}\equiv S_{n}=\min\left\{ 1,f\left(q_{n}\right),g\left(q_{n}\right)\right\} >0;\label{losod}
\end{equation}
 hence the sequence $\left(\tau_{n}\right)_{n\geq0}$ of real positive
numbers either converges or diverges to $\infty.$ On the one hand,
from \eqref{ldod}, it follows that $\left(f\left(q_{n}\right)\right)$
converges to zero if and only if $\left(q_{n}\right)$ diverges to
$\infty.$ But 
\begin{equation}
q_{n}\equiv{\displaystyle \max_{1\leq i\leq4}}\Biggl\{\left\Vert \overline{N^{\tau_{n}}_{i}}\right\Vert _{\infty},\left\Vert \overline{N^{-}_{1}}\right\Vert _{\infty},\left\Vert \overline{N^{-}_{2}}\right\Vert _{\infty},\left\Vert \overline{N^{+}_{3}}\right\Vert _{\infty},\left\Vert \overline{N^{+}_{4}}\right\Vert _{\infty}\Biggr\}<R_{0}
\end{equation}
therefore it is impossible that $\left(f\left(q_{n}\right)\right)$
converges to zero.\\
 On the other hand, from \eqref{loodie}, $\left(g\left(q_{n}\right)\right)$
converges to zero if and only if $\left(q_{n}\right)$ converges to
$R_{0}.$ But for every $n\geq0,$ $q_{n}<R_{0}$ and from \eqref{sssqq},
$R_{0}$ is taken arbitrarily such that $R_{0}>q_{0}.$ Therefore
it holds that 
\begin{equation}
\forall n\geq0,\,q_{n}\leq q_{0}.
\end{equation}
It follows that $\left(q_{n}\right)$ cannot converge to $R_{0},$
hence $\left(g\left(q_{n}\right)\right)$ cannot converge to zero.
It follows from \eqref{losod} that $\left(S_{n}\right)$ cannot converge
to zero. \\
From \eqref{losod}, ${\displaystyle \sum^{n}_{k=0}}S_{k}={\displaystyle \sum^{n}_{k=0}\left(\tau_{k+1}-\tau_{k}\right)=\tau_{n+1}-\tau_{0}=\tau_{n+1},}$
therefore $\left(\tau_{n}\right)_{n\geq0}$ diverges to $\infty.$

As a direct conclusion, the function $N$ whose restriction to $\left[0,\tau_{1}\right[$
is $N^{0,\tau_{0}}$ and to $\left[\tau_{n},\tau_{n+1}\right[$ is
$N^{\tau_{n},\tau_{n+1}},$$\left(\forall n\geq0\right)$ is non-negative,
defined and continuous on $\left[0,+\infty\right[\times\left[a_{1},b_{1}\right]\times\left[a_{2},b_{2}\right]$
and is the unique solution to the problem $\Sigma.$ \\
Now by construction, for all $R_{0}>q_{0},$ $\left\Vert N^{\tau_{n},\tau_{n+1}}\right\Vert <R_{0}$
$,\forall n\geq0.$ Therefore for all $R_{0}>q_{0},$ $\left\Vert N\right\Vert <R_{0}.$
It follows that 
\begin{equation}
\left\Vert N\right\Vert \leq q_{0}.
\end{equation}
 But from \eqref{sskiu}, it holds (see \eqref{eq:kqiiq-1-1}-\eqref{eq:looqp-1-1-1-1})
that 
\begin{equation}
q_{0}\leq{\displaystyle \max_{1\leq i\leq4}}\left\{ \right.\left\Vert N^{0}_{i}\right\Vert _{\infty},\left\Vert N^{-}_{1}\right\Vert _{\infty},\left\Vert N^{-}_{2}\right\Vert _{\infty},\left\Vert N^{+}_{3}\right\Vert _{\infty},\left\Vert N^{+}_{4}\right\Vert _{\infty}\left.\right\} .
\end{equation}
We conclude 
\begin{equation}
\left\Vert N\right\Vert \leq{\displaystyle \max_{1\leq i\leq4}}\left\{ \right.\left\Vert N^{0}_{i}\right\Vert _{\infty},\left\Vert N^{-}_{1}\right\Vert _{\infty},\left\Vert N^{-}_{2}\right\Vert _{\infty},\left\Vert N^{+}_{3}\right\Vert _{\infty},\left\Vert N^{+}_{4}\right\Vert _{\infty}\left.\right\} .
\end{equation}
\end{proof}

\section*{Conclusion }

In this paper, we have successfully extended the global existence
and uniqueness theory for the multidimensional initial-boundary value
problem associated with the four-velocity Broadwell model. By leveraging
and adapting our previously established fixed-point framework, we
have demonstrated that the restriction to sufficiently small initial
data can be completely overcome. Our approach establishes that classical
solutions, along with their first-order partial derivatives, exist
and remain well-defined globally in time for arbitrary (large) initial
and boundary data that are bounded in $C^{1}$, with bounded first-order
derivatives. While this work provides an advancement in the existence
theory of classical solutions of non-stationary multidimensional discrete
kinetic equations, it also opens up intriguing paths for future research.
Specifically, although the solutions and their derivatives are proved
to exist for all $t\in[0,+\infty[$, their long-term asymptotic behavior
remains unresolved. Determining whether these derivatives remain uniformly
bounded or grow indefinitely as $t\to+\infty$ constitutes a challenging
open question. Furthermore, a natural and promising extension of this
work will be to apply our fixed-point methodology to general discrete
kinetic equations involving binary or multiple collisions. Investigating
both this structural generalization and the asymptotic stability of
the derivatives will be the subject of forthcoming investigations.

\end{document}